\documentclass[sn-mathphys-num]{sn-jnl}% Math and Physical Sciences Numbered Reference Style 
%\documentclass[sn-mathphys-ay]{sn-jnl}% Math and Physical Sciences Author Year Reference Style
%%\documentclass[sn-aps]{sn-jnl}% American Physical Society (APS) Reference Style
%\documentclass[sn-vancouver,Numbered]{sn-jnl}% Vancouver Reference Style
%\documentclass[sn-apa]{sn-jnl}% APA Reference Style 
%%\documentclass[sn-chicago]{sn-jnl}% Chicago-based Humanities Reference Style

%%%% Standard Packages
%%<additional latex packages if required can be included here>

\usepackage{graphicx}%
\usepackage{multirow}%
\usepackage{amsmath,amssymb,amsfonts}%
\usepackage{amsthm}%
\usepackage{mathrsfs}%
\usepackage[title]{appendix}%
\usepackage{xcolor}%
\usepackage{textcomp}%
\usepackage{manyfoot}%
\usepackage{booktabs}%
\usepackage{algorithm}%
\usepackage{algorithmicx}%
\usepackage{algpseudocode}%
\usepackage{listings}%
\usepackage{hyperref, upgreek, mathtools, tikz, float}
%%%%

%%%%%=============================================================================%%%%
%%%%  Remarks: This template is provided to aid authors with the preparation
%%%%  of original research articles intended for submission to journals published 
%%%%  by Springer Nature. The guidance has been prepared in partnership with 
%%%%  production teams to conform to Springer Nature technical requirements. 
%%%%  Editorial and presentation requirements differ among journal portfolios and 
%%%%  research disciplines. You may find sections in this template are irrelevant 
%%%%  to your work and are empowered to omit any such section if allowed by the 
%%%%  journal you intend to submit to. The submission guidelines and policies 
%%%%  of the journal take precedence. A detailed User Manual is available in the 
%%%%  template package for technical guidance.
%%%%%=============================================================================%%%%

%% as per the requirement new theorem styles can be included as shown below
\theoremstyle{thmstyleone}%
\newtheorem{theorem}{Theorem}%  meant for continuous numbers

%%\newtheorem{theorem}{Theorem}[section]% meant for sectionwise numbers
%% optional argument [theorem] produces theorem numbering sequence instead of independent numbers for Proposition
\newtheorem{proposition}{Proposition}%
\newtheorem{lemma}{Lemma}
\newtheorem{corollary}{Corollary}
\newtheorem*{question*}{Question}

\theoremstyle{thmstyletwo}%
\newtheorem{example}{Example}%
\newtheorem{remark}{Remark}%

\theoremstyle{thmstylethree}%

\newcommand{\abs}[1]{\lvert#1\rvert}
\newcommand*\induce[1]{#1_\mathcal{K}}
\newcommand*\hyp[1]{\mathcal{#1}}
\newcommand*\ordgrow{\overline{\mathbb{O}}}
\newcommand*\ordzero{\textbf{0}}

\raggedbottom
%%\unnumbered% uncomment this for unnumbered level heads

\begin{document}

\title[Article Title]{Generalized entropy of induced zero-entropy systems}

%%=============================================================%%
%% GivenName	-> \fnm{Joergen W.}
%% Particle	-> \spfx{van der} -> surname prefix
%% FamilyName	-> \sur{Ploeg}
%% Suffix	-> \sfx{IV}
%% \author*[1,2]{\fnm{Joergen W.} \spfx{van der} \sur{Ploeg} 
%%  \sfx{IV}}\email{iauthor@gmail.com}
%%=============================================================%%

\author*{\fnm{Gabriel} \sur{Lacerda}\footnote{The author is a PhD candidate at UFRJ, funded by a CNPq grant, and gratefully acknowledges financial support for this research from the Fulbright U.S. Student Program, which is sponsored by the U.S. Department of State and Fulbright Brasil. Moreover, the author thanks S. Romaña and E. Pujals for the fruitful discussions that enriched this paper, as well as the CUNY Graduate Center for its hospitality.}}\email{lacerda@im.ufrj.br}

\affil{\orgdiv{Mathematics Department}, \orgname{Federal University of Rio de Janeiro}, \orgaddress{\street{Av. Athos da Silveira Ramos 149}, \city{Rio de Janeiro}, \postcode{21945-970}, \state{RJ}, \country{Brazil}}}

%%==================================%%
%% Sample for unstructured abstract %%
%%==================================%%

\abstract{Given a compact metric space $X$ and a continuous map $T: X \to X$, the induced hyperspace map $\induce{T}$ acts on the hyperspace $\hyp{K}(X)$ of nonempty closed sets of $X$, and the measure-induced map $T_*$ acts on the space of probability measures $\mathcal{M}(X)$. It is proven that a large class of zero-entropy dynamical systems exhibits infinite metric mean dimension in its induced hyperspace map $\induce{T}$. This work also builds on the concept of generalized entropy, which is fundamental for studying the complexity of zero-entropy systems. Lower bounds of the generalized entropy of the measure-induced map $T_*$ are established, assuming that the base system $T$ has zero topological entropy. Moreover, upper bounds of the generalized entropy are explicitly computed for the measure-induced map of the Morse-Smale diffeomorphisms on the circle. Finally, it is shown that the generalized entropy of $T_*$ is a lower bound for the generalized entropy of $\induce{T}$. }

\keywords{Generalized entropy, zero entropy, hyperspace, probability space, induced dynamics}

%%\pacs[JEL Classification]{D8, H51}

\pacs[MSC Classification]{37B02, 54B20}

\maketitle

\section{Introduction}

One of the fundamental objectives in dynamical systems is to characterize the complexity of a map through the behavior of its orbits. The main tool for measuring such complexity is the \emph{topological entropy}, which quantifies the exponential growth rate at which orbits separate. It is a crucial tool for classifying dynamical systems with high complexity, but there exist different classes of such systems that exhibit interesting phenomena while having zero topological entropy. In an attempt to differentiate maps with zero topological entropy, C. Labrousse and J.P. Marco introduced in \cite{labrousse_marco_2014} the concept of \emph{polynomial entropy}, which quantifies the polynomial growth of distinct orbits. Recently in \cite{correa_pujals_2023}, J. Correa and E. Pujals extended the notions of topological and polynomial entropy by introducing a more general concept, called \emph{generalized entropy}, to describe the growth rate at which orbits separate. This latter concept, instead of measuring the complexity of a map as a single non-negative real number—or simply treating it as infinite, works directly with the complete space of orders of growth $\ordgrow$, which is also a partially ordered set.

This work aims to investigate how low the chaoticity of a dynamical system can be for its induced dynamics to remain chaotic. Specifically, consider $(X, d)$ to be a compact metric space and $T: X \to X$ a continuous map. We consider two types of induced dynamics: the \emph{induced hyperspace map} $\induce{T}$ acting on the hyperspace $\hyp{K}(X)$ of all closed subsets of $X$, endowed with the Hausdorff metric $d_H$, and the \emph{measure-induced map} $T_*$ acting on the space of Borel probability measures $\mathcal{M}(X)$ given by the push-forward $\mu \mapsto \mu\circ T^{-1}$. From now on, the map $T$ will be also called the \emph{base} map. Both of these systems are useful for studying the collective dynamics, where the focus is on how the behavior of a collection of points, or statistical data, evolves over time. One aspect of these induced maps is that they can easily have high chaoticity. In fact, W. Bauer and K. Sigmund proved in \cite[Proposition 6]{bauer_sigmund_1975} that if the topological entropy of $T$ is positive, then both the topological entropies of $\induce{T}$ and $T_*$ are infinite. Our first result is an extension of this theorem for the induced hyperspace map, utilizing the concept of generalized entropy and metric mean dimension—a term defined in \cite{lindenstrauss_weiss_2000} by E. Lindenstrauss and B. Weiss used to distinguish dynamical systems with infinite topological entropy. We denote the generalized entropy of $T$ by $o(T)$ and the \emph{upper metric mean dimension} of $T$ by $\overline{\text{mdim}}(X, d, T)$.\\

\begin{theorem}\label{thm:generalized-more-linear-infinite-metric-mean-dim}
If $o(T) > [n]$, then $\overline{\emph{mdim}}(\hyp{K}(X), d_H, \induce{T}) = \infty$.\\
\end{theorem}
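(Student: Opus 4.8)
The plan is to establish the stronger statement that
$\overline{h}(\induce{T},\epsilon_0):=\limsup_{n\to\infty}\tfrac1n\log s_n(\epsilon_0,\induce{T})=+\infty$ for some fixed $\epsilon_0>0$, where $s_n(\epsilon,\cdot)$ denotes the maximal cardinality of an $(n,\epsilon)$-separated set. This already forces $\overline{\text{mdim}}(\hyp{K}(X),d_H,\induce{T})=\infty$: since $\overline{h}(\induce{T},\epsilon)$ is non-increasing in $\epsilon$, we get $\overline{h}(\induce{T},\epsilon)=+\infty$ for every $\epsilon\le\epsilon_0$, hence $\overline{h}(\induce{T},\epsilon)/\log(1/\epsilon)=+\infty$ for all such $\epsilon$, and so the $\limsup$ over $\epsilon\to0$ defining the upper metric mean dimension is $+\infty$. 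To use the hypothesis, recall that $o(T)$ is the supremum over $\epsilon>0$ of the orders of growth of $n\mapsto s_n(\epsilon,T)$; thus $o(T)\not\le[n]$ produces a scale $\epsilon_1>0$ at which the order of growth of $s_n(\epsilon_1,T)$ is not $\le[n]$, i.e.\ $s_n(\epsilon_1,T)\ne O(n)$. Fix a sequence $n_k\to\infty$ with $N_k:=s_{n_k}(\epsilon_1,T)$ satisfying $N_k/n_k\to\infty$, and for each $k$ fix an $(n_k,\epsilon_1)$-separated set $E_k=\{x_1,\dots,x_{N_k}\}\subseteq X$.

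The core of the argument is an amplification step: from $E_k$ one builds a family $\mathcal{F}_k\subseteq\hyp{K}(X)$ that is $(n_k,\epsilon_1/3)$-separated for $\induce{T}$ and has cardinality at least $\binom{N_k}{n_k}$ (or, more conservatively, at least $(N_k/n_k)^{c\,n_k}$ for a fixed $c>0$). Granting this, $\tfrac1{n_k}\log\lvert\mathcal{F}_k\rvert\ge\log(N_k/n_k)\to\infty$, so $\overline{h}(\induce{T},\epsilon_1/3)=+\infty$ and we are done. The natural candidate for $\mathcal{F}_k$ is the collection of finite sets $A_S=\{x_i:i\in S\}$, with $S$ ranging over a suitable family of subsets of $\{1,\dots,N_k\}$ of cardinality $\approx n_k$. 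The mechanism behind the separation is that $d_H(\induce{T}^jA_S,\induce{T}^jA_{S'})\ge d(T^jx_i,T^jA_{S'})$ whenever $x_i\in S\setminus S'$, so it suffices to find, for each pair $S\ne S'$ and some $x_i\in S\triangle S'$, a time $j<n_k$ at which $T^jx_i$ is $\epsilon_1/3$-far from the image of the other set. If no such time existed, then at every time $j$ some point of the other set would shadow $T^jx_i$ within $\epsilon_1/3$, and a pigeonhole over the $n_k$ times and the $\lvert S'\rvert$ possible shadowers would yield a single point of $E_k$ staying $\epsilon_1/3$-close to $x_i$ at a definite positive fraction of the times---a rigidity one wants to exclude for orbit segments of a continuous map, possibly after first passing to a subfamily of $E_k$ (still superlinear in $n_k$, perhaps at a slightly smaller scale, and using that such separated sets are available at every small scale and over every long window) on which such persistent shadowing cannot occur.

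The main obstacle is precisely this amplification lemma. Unlike in the case of positive topological entropy---where Bauer--Sigmund-type arguments manufacture arbitrarily many independent directions inside $\induce{T}$ by embedding arbitrarily high products of a single ergodic measure of positive entropy---here the base $T$ may have zero topological entropy, so no such measure exists and the amplification must be purely combinatorial, extracting exponential complexity of $\hyp{K}(X)$ out of merely superlinear separation of $T$. The delicate point is that two distinct finite subsets of an $(n,\epsilon)$-separated set need \emph{not} remain Hausdorff-separated along their orbits: they can be Hausdorff-intertwined at every time when the points of $E_k$ shadow one another at most times. Controlling this---either by a careful choice of $\mathcal{F}_k$ that exploits $N_k\gg n_k$, or by showing that persistent mutual shadowing of orbit segments forces an equicontinuity-type obstruction incompatible with $s_{n_k}(\epsilon_1,T)\gg n_k$---is where the real work lies, and it is also what ultimately yields the strong conclusion of \emph{infinite}, rather than merely positive, metric mean dimension.
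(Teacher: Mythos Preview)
Your overall strategy---turn superlinear separation in $X$ into exponential growth of separated/spanning numbers in $\hyp{K}(X)$, then read off infinite entropy at a fixed scale and hence infinite upper metric mean dimension---has the right shape and is essentially what the paper does (phrased there as a contrapositive). The genuine gap is the amplification step, which you yourself flag as ``where the real work lies'' and do not close.

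The paper does not attempt any pigeonhole/shadowing workaround. It invokes Lemma~4.1 of \cite{lacerda_romana_2024}, which delivers the amplification outright:
\[
\text{Span}(\induce{T},n,\varepsilon/2)\ \ge\ 2^{\text{Sep}(T,n,\varepsilon)}-1.
\]
Given this, the argument is immediate: if $\overline{\text{mdim}}(\hyp{K}(X),d_H,\induce{T})\le M<\infty$, then $\log\text{Span}(\induce{T},n,\varepsilon/2)\le C_\varepsilon\, n$ eventually, so $\text{Sep}(T,n,\varepsilon)\le C'_\varepsilon\, n$ and $o(T)\le[n]$. There is no need to restrict to subsets of size $\approx n_k$; every nonempty subset of a maximal $(n,\varepsilon)$-separated set contributes. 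Your worry is legitimate at the level of \emph{separated} sets---two distinct subsets of $E$ can indeed fail to be separated at scale $\varepsilon$ for the dynamical Hausdorff metric---but the cited inequality is for \emph{spanning} numbers, and it is this lemma (proved in the cited reference) that absorbs the difficulty you identified.

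Your proposed route does not close the gap as written. When $\lvert S'\rvert\approx n_k$, pigeonholing $n_k$ times over $\lvert S'\rvert$ shadowers produces a single shadower at only $O(1)$ times, which is no constraint; and even for smaller $\lvert S'\rvert$ there is no general ``equicontinuity-type obstruction'' preventing one orbit segment of a zero-entropy map from tracking another at a positive fraction of times while separating at the rest. The amplification genuinely requires the content of the cited lemma rather than a combinatorial refinement of your family $\mathcal{F}_k$.
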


The above theorem improves a result obtained by the author and S. Romaña in \cite[Theorem E]{lacerda_romana_2024}, which proves that if the topological entropy of $T$ is positive, then the upper metric mean dimension of its induced hyperspace map $\induce{T}$ is infinite. It is natural to ask if the converse of Theorem \ref{thm:generalized-more-linear-infinite-metric-mean-dim} also holds, the answer is no because Morse-Smale systems in the circle $S^1$ have a linear order of growth, as proved in \cite[Theorem 4]{correa_pujals_2023}, but the metric mean dimension of the induced hyperspace system is infinite \cite[Theorem A]{lacerda_romana_2024}. Observe that the above result cannot be replicated for the measure-induced map, because E. Glasner and B. Weiss proved in \cite{glasner_weiss_1995} that the topological entropy of $T$ is zero if, and only if, the topological entropy of $T_*$ is also zero. However, further investigation into the relationship between the generalized entropy of $T$ and $T_*$ is conducted in this work. Moreover, Theorem \ref{thm:generalized-more-linear-infinite-metric-mean-dim} is optimal; that is, there exists an example constructed by X. Huang and X. Wang in \cite[Proposition 3.6]{huang_wang_2022} of a compact metric space $(E, d)$, where $E$ is a subset of $[0, 1]^{\mathbb{N}}$, such that for the shift map $\sigma: E \to E$, $o(\sigma) = [n]$ and $\overline{\text{mdim}}(\hyp{K}(E), d_H, \induce{\sigma}) = 1$. Represent the topological entropy of $T$ by $h(T)$. Using the same argument, the following result is also optimal.\\

\begin{theorem}\label{thm:sub-linear-generalized-zero-metric-mean-dim}
If $o(T) < [n]$, then $h(\induce{T}) = 0$.\\
\end{theorem}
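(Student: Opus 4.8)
The plan is to bound the spanning numbers of $\induce{T}$ by those of the base map $T$, and then to read off the hypothesis $o(T)<[n]$ at the level of those numbers — the same mechanism as in Theorem~\ref{thm:generalized-more-linear-infinite-metric-mean-dim}, but now at the lower end of the order‑of‑growth scale. Write $r_n(\varepsilon)$ for the least cardinality of an $(n,\varepsilon)$‑spanning set of $(X,d,T)$ and $r_n^{\induce{T}}(\varepsilon)$ for the corresponding quantity of $(\hyp{K}(X),d_H,\induce{T})$. The first ingredient is the elementary identity for Bowen metrics: since $(\induce{T})^i=\induce{T^i}$, the $n$‑th Bowen metric of $\induce{T}$ built from $d_H$ coincides with the Hausdorff metric $(d_n)_H$ built from the $n$‑th Bowen metric $d_n$ of $T$, i.e. $\max_{0\le i<n}d_H(T^iK,T^iL)=(d_n)_H(K,L)$ for all $K,L\in\hyp{K}(X)$.

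Granting this, the key combinatorial step is the bound $r_n^{\induce{T}}(\varepsilon)\le 2^{r_n(\varepsilon)}$. Let $F\subseteq X$ be $(n,\varepsilon)$‑spanning with $\abs{F}=r_n(\varepsilon)$; since $F$ is finite, all of its nonempty subsets are closed, and I claim they form an $(n,\varepsilon)$‑spanning set for $\induce{T}$. To each $K\in\hyp{K}(X)$ associate $\Phi(K)=\{x\in F:\ d_n(x,K)<\varepsilon\}$, where $d_n(x,K)=\min_{y\in K}d_n(x,y)$: this set is nonempty because $F$ is spanning, and one checks $(d_n)_H(K,\Phi(K))<\varepsilon$ by observing that every point of $K$ is $(n,\varepsilon)$‑shadowed by some point of $F$, which then lies in $\Phi(K)$. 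By the Bowen‑metric identity $\Phi(K)$ is within $n$‑th Bowen distance $<\varepsilon$ of $K$ for $\induce{T}$, so the nonempty subsets of $F$ indeed span and the bound follows. Passing to logarithms and taking the entropy limit,
\[
h(\induce{T})=\lim_{\varepsilon\to0}\ \limsup_{n\to\infty}\frac{1}{n}\log r_n^{\induce{T}}(\varepsilon)\ \le\ (\log 2)\,\lim_{\varepsilon\to0}\ \limsup_{n\to\infty}\frac{r_n(\varepsilon)}{n},
\]
and since the inner $\limsup$ is non‑decreasing as $\varepsilon$ decreases, the outer limit exists in $[0,\infty]$.

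It remains to show that $o(T)<[n]$ forces $\limsup_{n\to\infty}r_n(\varepsilon)/n=0$ for every fixed $\varepsilon>0$; with the displayed inequality this gives $h(\induce{T})=0$. Recalling that $r_n(\varepsilon)$ and the $(n,\varepsilon)$‑separated numbers $s_n(\varepsilon)$ agree up to halving $\varepsilon$, and that $o(T)$ is the supremum in $\ordgrow$, over all $\varepsilon>0$, of the orders of growth of the sequences $\big(s_n(\varepsilon)\big)_n$ (non‑decreasing in $n$, and non‑decreasing as $\varepsilon$ decreases), I expect this to be the main obstacle, and I would isolate it as a lemma on the order‑of‑growth formalism of \cite{correa_pujals_2023}. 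The delicate point is that $[(a_n)_n]<[n]$ by itself only gives $a_n=O(n)$ and not $a_n=o(n)$ — a sequence constant on long, sparse blocks can reach size $\sim a\,n$ for infinitely many $n$ while still lying strictly below $[n]$ — so one cannot work scale by scale; one must exploit that the strict inequality $o(T)<[n]$ holds for the supremum over \emph{all} $\varepsilon$ at once. One route is the contrapositive: from a single $\varepsilon_0$ with $\limsup_n s_n(\varepsilon_0)/n>0$, using monotonicity in $\varepsilon$ and the description of suprema in $\ordgrow$, manufacture an order of growth lying below $[n]$ but not below $o(T)$, contradicting $o(T)<[n]$. Finally, rerunning the same estimate with $\overline{\text{mdim}}$ in place of $h$, and invoking the example of Huang and Wang \cite{huang_wang_2022} — for which $o(\sigma)=[n]$ while $\overline{\text{mdim}}(\hyp{K}(E),d_H,\induce{\sigma})=1$ — shows, exactly as for Theorem~\ref{thm:generalized-more-linear-infinite-metric-mean-dim}, that the threshold $[n]$ is sharp.
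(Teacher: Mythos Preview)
Your strategy coincides with the paper's. The paper isolates the same combinatorial bound $\text{Span}(\induce{T},n,\varepsilon)\le 2^{\text{Span}(T,n,\varepsilon)}$ as a separate lemma (Lemma~\ref{lemma:span_hyperspace_bounded_exponential}), proved by essentially your construction: to a given closed set, associate a suitable subset of a minimal $(n,\varepsilon)$-spanning set for $T$ and check that it $(n,\varepsilon)$-shadows in the Hausdorff/Bowen metric. It then passes to $\frac{1}{n}\log$, obtaining your displayed inequality, and finishes in one line by invoking the Correa--Pujals comparison lemma (Lemma~\ref{lemma:correa-pujals-comparison}): from $o(T)<[n]$ it asserts directly that $\lim_{n\to\infty}\text{Span}(T,n,\varepsilon)/n=0$ for every $\varepsilon>0$, hence $h(\induce{T})=0$.

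The only place you and the paper diverge is in how much scrutiny you give that last implication. The paper does not discuss the subtlety you flag (that $[(a_n)]<[n]$ for a single non-decreasing sequence, read through Lemma~\ref{lemma:correa-pujals-comparison}, yields only $\liminf_n a_n/n=0$); it simply cites the lemma and moves on. Your proposed contrapositive route through the description of suprema in $\ordgrow$ is only sketched, so as written your argument is incomplete precisely at the point where the paper is tersest. If you are content to match the paper's level of detail, just cite Lemma~\ref{lemma:correa-pujals-comparison} there; if you want a fully self-contained argument, the sketch you outline would need to be carried out.
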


This last result is surprising when contrasted with Theorem \ref{thm:generalized-more-linear-infinite-metric-mean-dim}, as it highlights the lack of any results indicating the complexity of the induced hyperspace map for dynamical systems that exhibit linear separation of orbits, which leaves open questions for maps with linear order of growth.\\

\begin{question*}
    If $o(T) = [n]$, then $h(\induce{T}) > 0$?\\
\end{question*}

In other words, this means that the chaoticity of the induced hyperspace map is still unknown when the base map has linear order of growth. To summarize the previous results, we can present what we know about the complexity of these induced maps in a simple way using the set of complete orders of growth, which will be depicted as a bidimensional object in Figure \ref{fig:ord_growth}, since $\ordgrow$ only has a partial order.\\

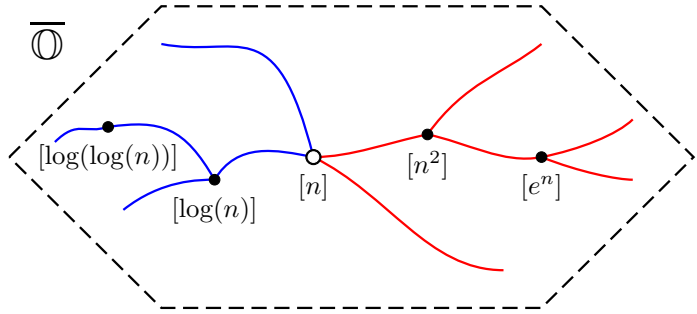
\begin{figure}[ht]
    \centering
    \begin{tikzpicture}
        \node at (0.5,1.5) {\Large $\ordgrow$};
        \coordinate (A) at (0,0);
        \coordinate (B) at (2,2);
        \coordinate (C) at (2,-2);
        \coordinate (D) at (7, 2);
        \coordinate (E) at (7, -2);
        \coordinate (F) at (9, 0);

        \draw[line width=0.3mm, dash pattern=on 2mm off 1mm] (A) -- (B) -- (D) -- (F) -- (E) -- (C) -- cycle;
    
        \draw[blue, line width=0.3mm] (0.6,0.2) .. controls (0.9,0.5) and (1,0.3) .. (1.3,0.4);

        \draw[blue, line width=0.3mm] (1.3,0.4) .. controls (2,0.5) and (2.3,0.4) .. (2.7,-0.3);

        \draw[blue, line width=0.3mm] (2.7,-0.3) .. controls (3,0.2) and (3.5,0.1) .. (4,0);

        \draw[blue, line width=0.3mm] (2.7,-0.3) .. controls (2.5,-0.3) and (2,-0.3) .. (1.5,-0.7);

        \draw[blue, line width=0.3mm] (4,0) .. controls (3.5,2) and (3,1.3) .. (2,1.5);

        \draw[red, line width=0.3mm] (4,0) .. controls (4.5,0) and (5,0.2) .. (5.5,0.3);

        \draw[red, line width=0.3mm] (4,0) .. controls (5,-0.5) and (5.5,-1.5) .. (6.5,-1.5);

        \draw[red, line width=0.3mm] (5.5,0.3) .. controls (6,1) and (6.5,1.1) .. (7,1.5);

        \draw[red, line width=0.3mm] (5.5,0.3) .. controls (6,0.2) and (6.5,-0.1) .. (7,0);

        \draw[red, line width=0.3mm] (7,0) .. controls (7.5,0.1) and (8,0.3) .. (8.2,0.5);

        \draw[red, line width=0.3mm] (7,0) .. controls (7.5,-0.2) and (8,-0.3) .. (8.2,-0.3);

        \filldraw[black] (1.3,0.4) circle (0.7mm);
        \node[below] at (1.3,3mm) {$[\log(\log(n))]$};

        \filldraw[black] (2.7,-0.3) circle (0.7mm);
        \node[below] at (2.7,-4mm) {$[\log(n)]$};

        \filldraw[black] (4,0) circle (1mm);
        \filldraw[white] (4,0) circle (0.7mm);
        \node[below] at (4,-1mm) {$[n]$};

        \filldraw[black] (5.5,0.3) circle (0.7mm);
        \node[below] at (5.5, 2mm) {$[n^2]$};

        \filldraw[black] (7,0) circle (0.7mm);
        \node[below] at (7,-1mm) {$[e^n]$};
    \end{tikzpicture}
    \caption{The orders of growth depicted in blue are those that are strictly less than $[n]$, and those in red are strictly greater.}
    \label{fig:ord_growth}
\end{figure}

To explain the above figure in a few words, if the generalized entropy of a dynamical system lies on a blue line, then the topological entropy of its induced hyperspace system is zero. If it lies on a red line, then the metric mean dimension of its induced hyperspace system is infinite.

The main argument used to prove the Theorem \ref{thm:sub-linear-generalized-zero-metric-mean-dim} provides a formula to explicitly calculate the generalized entropy of the hyperspace map, given that we know how the spanning set of the base system grows.\\ 

\begin{corollary}\label{coro:order-growth-hyperspace-character}
    The generalized entropy of the hyperspace induced map $\induce{T}$ is given by $o(T_\hyp{K}) = \sup \{[2^{\emph{Span}(T, n, \varepsilon)}] \in \mathbb{O}; \varepsilon > 0\}$.\\
\end{corollary}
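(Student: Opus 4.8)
The plan is to prove the two inequalities
$o(\induce{T})\le\sup_{\varepsilon>0}[2^{\mathrm{Span}(T,\cdot,\varepsilon)}]$ and
$o(\induce{T})\ge\sup_{\varepsilon>0}[2^{\mathrm{Span}(T,\cdot,\varepsilon)}]$ separately; together with
$o(S)=\sup_{\varepsilon>0}[\mathrm{Span}(S,\cdot,\varepsilon)]$ in $\ordgrow$ this is exactly the asserted identity, and it is the quantitative core of Theorem~\ref{thm:sub-linear-generalized-zero-metric-mean-dim} (feeding $o(T)<[n]$ into the right-hand side makes its exponential part trivial, whence $h(\induce{T})=0$). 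Here $\mathrm{Span}(S,n,\varepsilon)$ and $\mathrm{Sep}(S,n,\varepsilon)$ denote, as usual, the least cardinality of an $(n,\varepsilon)$-spanning set and the largest cardinality of an $(n,\varepsilon)$-separated set; on $X$ we use the Bowen metric $d_n(x,y)=\max_{0\le k<n}d(T^kx,T^ky)$ and on $\hyp{K}(X)$ the Bowen metric $D_n(A,B)=\max_{0\le k<n}d_H(T^kA,T^kB)$, recalling that $\induce{T}^k=T^k$ on closed sets, so that these are indeed the metrics computing $o(T)$ and $o(\induce{T})$.

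\emph{Upper bound.} Fix $\delta>0$ and an $(n,\delta/2)$-spanning set $E\subseteq X$ for $T$ with $\abs{E}=\mathrm{Span}(T,n,\delta/2)$. The family $\mathcal{E}$ of all nonempty subsets of $E$ is $(n,\delta)$-spanning for $\induce{T}$: given $K\in\hyp{K}(X)$, choose for each $x\in K$ a point $y_x\in E$ with $d_n(x,y_x)<\delta/2$ and put $A=\{y_x:x\in K\}\in\mathcal{E}$; then for every $0\le k<n$ each $T^kx\in T^kK$ has $\mathrm{dist}(T^kx,T^kA)\le d(T^kx,T^ky_x)<\delta/2$ and each $T^ky_x\in T^kA$ has $\mathrm{dist}(T^ky_x,T^kK)\le d(T^ky_x,T^kx)<\delta/2$, so $d_H(T^kK,T^kA)\le\delta/2<\delta$ and hence $D_n(K,A)<\delta$. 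Thus $\mathrm{Span}(\induce{T},n,\delta)\le\abs{\mathcal{E}}=2^{\mathrm{Span}(T,n,\delta/2)}-1<2^{\mathrm{Span}(T,n,\delta/2)}$, so $[\mathrm{Span}(\induce{T},\cdot,\delta)]\le[2^{\mathrm{Span}(T,\cdot,\delta/2)}]\le\sup_{\varepsilon>0}[2^{\mathrm{Span}(T,\cdot,\varepsilon)}]$, and taking $\sup_{\delta>0}$ on the left gives the upper bound.

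\emph{Lower bound.} It suffices to prove $o(\induce{T})\ge[2^{\mathrm{Span}(T,\cdot,\varepsilon)}]$ for each fixed $\varepsilon>0$, i.e. to produce, at a scale independent of $n$, of order $2^{\mathrm{Span}(T,n,\varepsilon)}$ pairwise $D_n$-separated closed sets. The natural candidates are the nonempty subsets of a maximal $(n,\varepsilon)$-separated set $F=\{x_1,\dots,x_N\}$ of $T$, where $N=\mathrm{Sep}(T,n,\varepsilon)\ge\mathrm{Span}(T,n,\varepsilon)$: these are $2^N-1$ distinct closed sets, and if $A\neq B$ with $x_i\in A\setminus B$ then $\mathrm{dist}(x_i,B)\ge\min_{p\neq q}d(x_p,x_q)=:\delta_0(F)>0$, so $D_n(A,B)\ge d_H(A,B)\ge\delta_0(F)$ already at time $0$; when $\mathrm{Span}(T,n,\varepsilon)$ stays bounded one may keep $\delta_0(F)$ bounded away from $0$ and conclude at once. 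The real difficulty — the step I expect to be the main obstacle — is the regime $\mathrm{Span}(T,n,\varepsilon)\to\infty$: a packing argument then forces $\delta_0(F)\to0$, so the time-$0$ estimate decays with $n$, and one cannot compensate by using the separation of $F$ at later times, because the nearest-point matching realizing $d_H(T^kA,T^kB)$ can jump between different points of $B$ as $k$ varies, so $(n,\varepsilon)$-separation of $F$ does \emph{not} by itself make its subsets $(n,\varepsilon')$-separated for $\induce{T}$ at a uniform $\varepsilon'$ (elementary examples confirm this). The way I would get around it is to replace bare subsets by \emph{robust} closed sets whose separation is "frozen in'' — for instance finite unions of small Bowen balls $B_n(x_i,\rho)$ with $\rho\ll\varepsilon$, or, passing to the $\induce{T}$-invariant subspace of finite sets of a fixed cardinality, comparing $D_n$ there with the sup--Bowen metric of the product system $T\times\cdots\times T$ through the finite-to-one factor $(z_1,\dots,z_N)\mapsto\{z_1,\dots,z_N\}$ (which carries $\mathrm{Sep}(T^{\times N},n,\varepsilon)\ge\mathrm{Sep}(T,n,\varepsilon)^N$) — so that separation persists through every iterate up to an $n$-independent loss. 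Verifying that such a family is $D_n$-separated at a fixed scale for all $n$ is the technical heart of the argument, and it is precisely the quantitative phenomenon already responsible for the infinite metric mean dimension of $\induce{T}$ in \cite[Theorem A]{lacerda_romana_2024}. Granting it, one obtains $[\mathrm{Sep}(\induce{T},\cdot,\varepsilon')]\ge[2^{\mathrm{Span}(T,\cdot,\varepsilon)}]$ for a suitable $\varepsilon'=\varepsilon'(\varepsilon)$, hence $o(\induce{T})\ge[2^{\mathrm{Span}(T,\cdot,\varepsilon)}]$; since $\varepsilon>0$ was arbitrary, $o(\induce{T})\ge\sup_{\varepsilon>0}[2^{\mathrm{Span}(T,\cdot,\varepsilon)}]$, and together with the upper bound this yields $o(\induce{T})=\sup\{[2^{\mathrm{Span}(T,n,\varepsilon)}]\in\mathbb{O};\varepsilon>0\}$.
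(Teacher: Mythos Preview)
Your upper bound is correct and is exactly the content of the paper's Lemma~\ref{lemma:span_hyperspace_bounded_exponential}: the nonempty subsets of an $(n,\varepsilon)$-spanning set for $T$ form an $(n,\varepsilon)$-spanning set for $\induce{T}$, giving $\text{Span}(\induce{T},n,\varepsilon)\le 2^{\text{Span}(T,n,\varepsilon)}$. (You use $\delta/2$ where the paper uses $\delta$, but this is immaterial after taking the supremum over scales.)

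The lower bound, however, is left genuinely incomplete. You correctly diagnose that the naive candidate --- the $2^{N}-1$ nonempty subsets of an $(n,\varepsilon)$-separated set $F$ --- need not be $(n,\varepsilon')$-separated for $\induce{T}$ at any uniform $\varepsilon'$, because the nearest neighbour in $T^kB$ can change with $k$; your ``elementary examples confirm this'' is accurate. But the workarounds you then propose (replacing points by small Bowen balls, or passing through the product $T^{\times N}$ and the symmetrisation map) are not carried out, and you explicitly write ``Granting it'' before concluding. That is the gap.

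The paper's route is shorter than what you sketch: it does not build any new separated family at all, but simply invokes \cite[Lemma~4.1]{lacerda_romana_2024}, which already states the inequality
\[
\text{Span}(\induce{T},n,\varepsilon/2)\ \ge\ 2^{\text{Sep}(T,n,\varepsilon)}-1.
\]
Since $\text{Sep}(T,n,\varepsilon)\ge\text{Span}(T,n,\varepsilon)$, this yields $[\text{Span}(\induce{T},n,\varepsilon/2)]\ge[2^{\text{Span}(T,n,\varepsilon)}]$ for every $\varepsilon>0$, and hence $o(\induce{T})\ge\sup_{\varepsilon>0}[2^{\text{Span}(T,n,\varepsilon)}]$. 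Your instinct to locate the missing ingredient in \cite{lacerda_romana_2024} was right --- you cite Theorem~A there, whose proof is precisely where Lemma~4.1 is used --- but the point is that this lemma is a ready-made black box giving the required bound on the \emph{spanning} number of $\induce{T}$ directly, so none of the Bowen-ball or product-system machinery you outline is needed. Once you quote that lemma, the two inequalities combine immediately to give the corollary, just as the paper does in two lines.
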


Note that a constant sequence is also a non-decreasing sequence and belongs to a special class in the set of orders of growth: It is the only class that is smaller than any other order of growth. In this case, it is denoted by $\ordzero$. As a direct consequence of the above corollary, we have the following result.\\

\begin{corollary}\label{coro:order-growth-hyperspace-iff}
    $o(T_\hyp{K}) = \emph{\ordzero}$ if, and only if, $o(T) = \emph{\ordzero}$.\\
\end{corollary}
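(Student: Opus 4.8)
The plan is to derive Corollary \ref{coro:order-growth-hyperspace-iff} directly from Corollary \ref{coro:order-growth-hyperspace-character}, using the characterization of $\ordzero$ as the unique minimal element in the set of orders of growth $\mathbb{O}$. Concretely, by Corollary \ref{coro:order-growth-hyperspace-character} we have
\begin{equation*}
o(T_\hyp{K}) = \sup\{[2^{\mathrm{Span}(T,n,\varepsilon)}] \in \mathbb{O} \,;\, \varepsilon > 0\},
\end{equation*}
so I must show that this supremum equals $\ordzero$ precisely when $o(T) = \ordzero$. Since a supremum over a family of orders of growth equals the minimal element $\ordzero$ if and only if \emph{every} member of the family equals $\ordzero$ (here one should note the family is nonempty, as it is indexed by all $\varepsilon > 0$), the condition $o(T_\hyp{K}) = \ordzero$ is equivalent to $[2^{\mathrm{Span}(T,n,\varepsilon)}] = \ordzero$ for all $\varepsilon > 0$.

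The next step is to translate the latter condition back to a statement about $o(T)$ itself. Recall that $o(T) = \sup\{[\mathrm{Span}(T,n,\varepsilon)] \,;\, \varepsilon > 0\}$, so by the same reasoning $o(T) = \ordzero$ if and only if $[\mathrm{Span}(T,n,\varepsilon)] = \ordzero$ for every $\varepsilon > 0$, i.e. for each $\varepsilon$ the function $n \mapsto \mathrm{Span}(T,n,\varepsilon)$ is bounded (equivalently, eventually constant, since it is non-decreasing in $n$ up to the usual conventions). Thus the corollary reduces to the elementary equivalence, for a fixed $\varepsilon$: the sequence $n \mapsto 2^{\mathrm{Span}(T,n,\varepsilon)}$ represents $\ordzero$ if and only if the sequence $n \mapsto \mathrm{Span}(T,n,\varepsilon)$ represents $\ordzero$. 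One direction is immediate: if $\mathrm{Span}(T,n,\varepsilon)$ is bounded by some constant $C$, then $2^{\mathrm{Span}(T,n,\varepsilon)} \le 2^{C}$ is bounded. For the converse, if $2^{\mathrm{Span}(T,n,\varepsilon)}$ is bounded by $C$, then $\mathrm{Span}(T,n,\varepsilon) \le \log_2 C$ is bounded; here one uses that $\mathrm{Span}(T,n,\varepsilon)$ is a non-negative integer, so boundedness of its exponential forces boundedness of the exponent.

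Assembling these equivalences gives the claim: $o(T_\hyp{K}) = \ordzero \iff [2^{\mathrm{Span}(T,n,\varepsilon)}] = \ordzero \ \forall \varepsilon \iff [\mathrm{Span}(T,n,\varepsilon)] = \ordzero \ \forall \varepsilon \iff o(T) = \ordzero$. I do not anticipate a serious obstacle here; the only point requiring a little care is the interchange between ``the supremum is minimal'' and ``each term is minimal,'' which hinges on $\ordzero$ being the global minimum of the partially ordered set $\mathbb{O}$ and on the indexing family being nonempty — both facts are already available from the discussion preceding the statement. The argument is otherwise a routine unwinding of definitions, and I would present it in a few lines immediately after the proof of Corollary \ref{coro:order-growth-hyperspace-character}.
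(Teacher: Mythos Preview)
Your argument is correct. The forward implication ($o(T)=\ordzero \Rightarrow o(T_\hyp{K})=\ordzero$) matches the paper's exactly: bounded $\mathrm{Span}(T,n,\varepsilon)$ gives bounded $2^{\mathrm{Span}(T,n,\varepsilon)}$, and Corollary \ref{coro:order-growth-hyperspace-character} finishes. For the converse, however, the paper takes a different route: rather than invoking the formula again and arguing that boundedness of $2^{\mathrm{Span}(T,n,\varepsilon)}$ forces boundedness of $\mathrm{Span}(T,n,\varepsilon)$, it observes that the map $x\mapsto\{x\}$ is a conjugacy between $T$ and $T_\hyp{K}$ restricted to the singletons, and then appeals to the conjugacy invariance of generalized entropy (Theorem~1 of \cite{correa_pujals_2023}) to obtain the general inequality $o(T_\hyp{K})\geq o(T)$. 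Your approach is more self-contained, needing only Corollary \ref{coro:order-growth-hyperspace-character} and elementary monotonicity of $t\mapsto 2^t$; the paper's approach yields the stronger statement $o(T_\hyp{K})\geq o(T)$ valid for all $T$, not just when $o(T_\hyp{K})=\ordzero$, at the cost of citing an external result.
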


The following results are related to the measure-induced map $T_*$. The first major advancements in this direction were made by W. Bauer and K. Sigmund in \cite{bauer_sigmund_1975} and by K. Sigmund in \cite{sigmund_1978}, where many topological properties shared—or not—between $T_*$ and $T$ were proved. The proof of the next result takes advantage of a proof in \cite{sigmund_1978} of a result stating that \emph{the topological entropy of $T_*$ is either zero or infinite} and provides a lower bound for the estimation of the generalized entropy of $T_*$. Since $h(T_*) = 0$ when $h(T) = 0$ (cf. \cite{glasner_weiss_1995}), this bound is relevant as it indicates that the complexity of the measure-induced map is significantly higher than that of the base map. Given a realizable order of growth $o$, we can consider the one-parameter family defined by the power of the order, $o^t$, where $t$ is a non-negative real number, and the supremum of this family will belong to the complete set of orders of growth, $\ordgrow$. \\

\begin{theorem}\label{thm:supremum-power-set-measure}
    If $o(T) \geq [a(n)]$, then $o(T_*) \geq \sup \mathbb{A}$, where $\mathbb{A} = \{[a(n)^t] \in \mathbb{O}; t \in (0, \infty)\}$.\\
\end{theorem}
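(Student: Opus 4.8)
The plan is to adapt K. Sigmund's argument for the dichotomy $h(T_*) \in \{0, \infty\}$ \cite{sigmund_1978} to the level of orders of growth. Since $\sup \mathbb{A} = \sup\{[a(n)^k] \in \ordgrow; k \in \mathbb{N}\}$ (for each real $t > 0$ one has $a(n)^t = O(a(n)^{\lceil t\rceil})$ once $a(n)$ is taken non-decreasing with values $\geq 1$, which is harmless since $[a(n)]$ is realizable and bounded above by a separated-set count), it suffices to prove $o(T_*) \geq [a(n)^k]$ for every positive integer $k$. The mechanism is the following subsystem of $T_*$: the set $\mathcal{M}_k = \{\frac{1}{k}\sum_{i=1}^{k}\delta_{y_i} : y_1,\dots,y_k \in X\}$ of $k$-point empirical measures is compact and $T_*$-invariant, and the evaluation map $\pi_k \colon X^k \to \mathcal{M}_k$, $\pi_k(y_1,\dots,y_k) = \frac{1}{k}\sum_i \delta_{y_i}$, is a topological factor map from $(X^k, T^{\times k})$ onto $(\mathcal{M}_k, T_*|_{\mathcal{M}_k})$ that is at most $k!$-to-one (and in fact $1$-Lipschitz when $\mathcal{M}(X)$ is given the Wasserstein metric $W_1$, which we may use since generalized entropy is a topological invariant).

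The key steps, in order, are the following. (1) Show that a finite-to-one topological factor map does not decrease generalized entropy, so that $o(\mathcal{M}_k, T_*|_{\mathcal{M}_k}) = o(X^k, T^{\times k})$; since $\mathcal{M}_k$ is a subsystem, this gives $o(T_*) \geq o(X^k, T^{\times k})$. (2) Bound $o(X^k, T^{\times k})$ from below: equipping $X^k$ with the maximum metric, a $k$-fold product of $(n,\varepsilon)$-separated subsets of $X$ is $(n,\varepsilon)$-separated in $X^k$, so $\mathrm{Sep}(T^{\times k}, n, \varepsilon) \geq \mathrm{Sep}(T, n, \varepsilon)^k$ and hence $o(X^k, T^{\times k}) \geq \sup_{\varepsilon>0} [\mathrm{Sep}(T, \cdot, \varepsilon)^k]$. (3) Invoke the hypothesis $o(T) \geq [a(n)]$: either fix $\varepsilon_0 > 0$ with $a(n) = O(\mathrm{Sep}(T, n, \varepsilon_0))$, whence $a(n)^k = O(\mathrm{Sep}(T,n,\varepsilon_0)^k) = O(\mathrm{Sep}(T^{\times k}, n, \varepsilon_0))$ and so $[a(n)^k] \leq o(X^k, T^{\times k})$; or, using that the $k$-th power operation commutes with the relevant suprema of orders of growth, $\sup_\varepsilon [\mathrm{Sep}(T,\cdot,\varepsilon)^k] = o(T)^k \geq [a(n)]^k = [a(n)^k]$. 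Either way $o(T_*) \geq [a(n)^k]$ for every $k \in \mathbb{N}$. (4) Take the supremum over $k$ to conclude $o(T_*) \geq \sup\{[a(n)^k]\} = \sup \mathbb{A}$.

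The main obstacle is step (1), which is exactly where Sigmund's technical work resides. For topological entropy this is Bowen's theorem—a finite-to-one factor map has vanishing fibre entropy—and the expectation is that the order-of-growth analogue of Bowen's fibre inequality holds for the same reason, since finite fibres carry no growth; but this must be set up with care within the framework of \cite{correa_pujals_2023}, or else argued directly from the fact that $\pi_k$ is at most $k!$-to-one. The delicate point is that $\pi_k$, although Lipschitz, can destroy $(n,\varepsilon)$-separation: two distinct $k$-point empirical measures may be weak$^*$-close and remain close along an arbitrarily long orbit segment when the underlying orbits exchange proximity from time to time, so one cannot simply push a separated set forward. Sigmund resolves this by a judicious choice of Wasserstein (or bounded-Lipschitz) test functions together with a pigeonhole argument over the finitely many optimal matchings that realize the transport cost at the successive times, and the same idea should control $\mathrm{Sep}(T_*, n, \cdot)$ here. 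A secondary, routine point is justifying in step (3) the passage to a single scale $\varepsilon_0$ (or, absent that, the commutation of the power operation with the supremum over $\varepsilon$), together with the trivial case in which $a(n)$ is bounded, where $\sup\mathbb{A} = \ordzero$ and there is nothing to prove.
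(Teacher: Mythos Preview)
Your route diverges from the paper's at the key technical step, and the gap you flag in step~(1) is real and is \emph{not} resolved by the mechanism you attribute to Sigmund. The paper never passes through $\pi_k\colon X^k\to\mathcal{M}_k$ or through any matching/pigeonhole argument. Instead it works entirely inside $\mathcal{M}(X)$ and exploits that $T_*$ is affine and that the metric $d_\infty(\mu,\nu)=\sum_k 2^{-k}\bigl|\int f_k\,d\mu-\int f_k\,d\nu\bigr|$ is linear in each argument. The core lemma (Lemma~\ref{lemma:sigmund-78-square-separate}, which the paper states is Sigmund's argument verbatim) says: if $E\subset\mathcal{M}(X)$ is $(n,\varepsilon)$-separated, then for small $b>0$ the set of \emph{asymmetric} combinations $E_b=\{(1-b)\mu_1+b\mu_2:\mu_1,\mu_2\in E\}$ is $(n,b\varepsilon)$-separated of size $(\#E)^2$. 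The asymmetry of the weights is exactly what kills the permutation ambiguity you describe: if $\mu_1\neq\mu_3$ one reads separation off the dominant coordinate, and if $\mu_1=\mu_3$ a one-line computation gives $d_\infty\bigl((1-b)\mu_1+b\mu_2,(1-b)\mu_1+b\mu_4\bigr)=b\cdot d_\infty(\mu_2,\mu_4)$. One then seeds with $o(T_*)\geq o(T)\geq[a(n)]$ (Proposition~\ref{prop:measure-induced-properties}) and iterates the squaring to reach any power $[a(n)^t]$.

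By contrast, your step~(1) asks whether a bounded-to-one factor preserves the \emph{order of growth} of separated sets, and this is not a consequence of Bowen's inequality, which controls only the exponential rate. A concrete warning: for the full $2$-shift and $k=2$ one has $\mathrm{Sep}(\sigma^{\times2},n,1)=4^n$ but $\mathrm{Sep}(\sigma_*|_{\mathcal{M}_2},n,1)=3^n$, since two unordered pairs are $(n,1)$-close in $\mathcal{M}_2$ precisely when their position-wise symbol multisets agree. So already at a fixed scale the projection of a maximal separated set loses an exponential factor, and no pigeonhole over the $k!$ matchings recovers it; any repair must shrink the scale, and carrying that out uniformly in $n$ is exactly the missing argument. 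If instead of the uniform weights $1/k$ you use distinct weights $c_1>\dots>c_k$, the ambiguity disappears and the proof goes through---but then you have reproduced the paper's convex-combination lemma rather than an independent argument via $\mathcal{M}_k$.
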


It is known that there are different dynamical systems with varied orders of growth. Indeed, as proven in \cite[Theorem 5]{correa_pujals_2023}, for any $o \in \ordgrow$, there exists a cylindrical cascade $G$ such that $o(G) \leq o$. This shows that the above theorem can be applied to different one-parameter families of orders of growth.

For the linear order of growth $[n] \in \mathbb{O}$, one can consider the family of polynomial orders of growth, $\mathbb{P} = \{[n^t]; t \in (0, \infty)\}$. This set is closely related to the notion of polynomial entropy. In fact, the concept of generalized entropy broadens all possible definitions that use the separation of orbits.  As mentioned above, Morse-Smale diffeomorphisms on the circle $S^1$ have a linear order of growth. Moreover, it is proved in \cite{correa_pujals_2023} that for all homeomorphisms of $S^1$, the generalized entropy is either linear or constant. Hence, we obtain the following immediate consequence of the above theorem.  \\

\begin{corollary}
    If $T: S^1 \to S^1$ is a circle homeomorphism, then either $o(T_*) \geq \sup \mathbb{P}$ or $o(T_*) = \emph{\ordzero}$.\\
\end{corollary}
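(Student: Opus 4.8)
The plan is to split the argument along the dichotomy for circle homeomorphisms recalled above: by \cite{correa_pujals_2023}, $o(T)$ is either the linear order $[n]$ or a constant order of growth, and the latter, being the minimum of $\ordgrow$, is precisely $\ordzero$. So it suffices to treat the cases $o(T) = [n]$ and $o(T) = \ordzero$ separately, locating $o(T_*)$ in each.

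In the first case I would simply invoke Theorem~\ref{thm:supremum-power-set-measure} with $a(n) = n$. The hypothesis $o(T) \geq [n]$ holds, so the theorem gives $o(T_*) \geq \sup \mathbb{A}$ with $\mathbb{A} = \{[n^t] \in \mathbb{O} : t \in (0,\infty)\}$; but this is exactly the family $\mathbb{P}$ of polynomial orders of growth, so $o(T_*) \geq \sup \mathbb{P}$, as wanted.

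In the second case I need to show $o(T_*) = \ordzero$. Since a circle homeomorphism with $o(T) = \ordzero$ must be equicontinuous (equivalently, topologically conjugate to a rotation), my plan is to transfer equicontinuity to the measure-induced system. First I would fix a $T$-invariant metric $\rho$ on $S^1$ compatible with the usual topology; such a metric exists for any equicontinuous homeomorphism of a compact space, for instance $\rho(x,y) = \sup_{k \in \mathbb{Z}} d(T^k x, T^k y)$, which is finite, equivalent to $d$ by equicontinuity, and satisfies $\rho(Tx, Ty) = \rho(x, y)$. Next I would note that $T_*$ is non-expanding for the Wasserstein (or Prokhorov) metric $D$ on $\mathcal{M}(S^1)$ built from $\rho$: any coupling of $\mu$ and $\nu$ pushes forward under $T \times T$ to a coupling of $T_*\mu$ and $T_*\nu$ of no greater cost, because $T$ is a $\rho$-isometry, whence $D(T_*\mu, T_*\nu) \leq D(\mu, \nu)$. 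Consequently the dynamical metrics of $T_*$ satisfy $\max_{0 \leq k < n} D(T_*^k \mu, T_*^k \nu) \leq D(\mu, \nu)$ for all $n$, so any finite $\varepsilon$-net of $(\mathcal{M}(S^1), D)$ is $(n, \varepsilon)$-spanning for every $n$; the spanning numbers of $T_*$ are thus bounded in $n$, which is to say $o(T_*) = \ordzero$. Finally, since $\mathcal{M}(S^1)$ is compact, any two compatible metrics on it are uniformly equivalent, so this conclusion is independent of the particular compatible metric used to define $o(T_*)$, and the two cases together yield the claimed dichotomy.

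I expect the only real obstacle to be the second case — more precisely, pinning down that $o(T) = \ordzero$ forces $T$ to be equicontinuous and then moving safely from $T$ to $T_*$. The subtlety is that the conjugacy between $T$ and a rotation need not be bi-Lipschitz, so one cannot simply transport $o(\cdot)$ through it on $\mathcal{M}(S^1)$; instead the argument must pass through equicontinuity, which is a purely topological property (via Arzel\`a--Ascoli) preserved both by topological conjugacy and by the push-forward construction. Everything else — the dichotomy of \cite{correa_pujals_2023}, the application of Theorem~\ref{thm:supremum-power-set-measure}, and the identification $\mathbb{A} = \mathbb{P}$ — is routine.
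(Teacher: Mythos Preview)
Your argument is correct and follows the same overall structure as the paper: invoke the Correa--Pujals dichotomy for circle homeomorphisms ($o(T) \in \{[n], \ordzero\}$), apply Theorem~\ref{thm:supremum-power-set-measure} with $a(n) = n$ in the linear case, and handle the $\ordzero$ case separately. The only difference is in that second case: the paper simply cites Proposition~\ref{prop:measure-induced-properties}(3), whose proof in turn quotes Correa--Pujals' characterization $o(T) = \ordzero \Leftrightarrow T$ Lyapunov stable together with Sigmund's result that Lyapunov stability passes from $T$ to $T_*$; you instead re-derive this transfer by hand, building a $T$-invariant metric on $S^1$ and observing that $T_*$ is non-expanding for the associated Wasserstein/Prokhorov distance. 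Your route is more self-contained and makes the mechanism transparent, while the paper's is shorter because the needed fact has already been isolated as a standalone proposition; substantively they are the same argument.
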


In a slightly more general way, another immediate result, similar to the one above, can be stated if it is known that a certain class of dynamical systems has an exact polynomial order of growth. Specifically, J. Correa and H. de Paula proved in \cite[Theorem 1]{correa_dePaula_2023} that for Morse-Smale diffeomorphisms $F$ defined on compact surfaces, there exists $k \in \mathbb{N}$ such that $o(F) = [n^k]$. Consequently, the following corollary holds.\\

\begin{corollary}
    If $S$ is a compact surface and $F: S \to S$ is a Morse-Smale diffeomorphism, then $o(F_*) \geq \sup\mathbb{P}$.\\
\end{corollary}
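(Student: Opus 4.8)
The plan is to obtain the corollary by feeding the Correa--de Paula computation of $o(F)$ into Theorem~\ref{thm:supremum-power-set-measure}. By \cite[Theorem~1]{correa_dePaula_2023} there is an integer $k$ with $o(F) = [n^k]$, so the whole argument reduces to two observations: that $k \geq 1$, and that Theorem~\ref{thm:supremum-power-set-measure} applied with $a(n) = n$ yields exactly $\sup \mathbb{P}$.

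For the first observation one must check that $F$ does not fall into the constant class $\ordzero$, which is the only place the surface hypothesis really enters. A Morse--Smale diffeomorphism on a compact surface has a finite non-wandering set consisting of hyperbolic periodic orbits, and this set necessarily contains at least one attracting orbit and one repelling orbit joined through the manifold; such a pair already forces $(n,\varepsilon)$-spanning sets to grow at least linearly in $n$ (pull a well-chosen non-periodic point back along its orbit), so $o(F) \geq [n]$ and hence $k \geq 1$. Equivalently, one may simply quote that Morse--Smale circle and surface systems have linear, not constant, order of growth. If \cite{correa_dePaula_2023} already records $k \geq 1$, or uses the convention $0 \notin \mathbb{N}$, this step is vacuous; in any case, comparing the non-decreasing sequences $n$ and $n^k$ gives $o(F) = [n^k] \geq [n]$.

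Now apply Theorem~\ref{thm:supremum-power-set-measure} with $T = F$ and $a(n) = n$: since $o(F) \geq [n] = [a(n)]$, we obtain $o(F_*) \geq \sup \mathbb{A}$ with $\mathbb{A} = \{[n^t] \in \mathbb{O}; t \in (0,\infty)\}$, and this is precisely the set $\mathbb{P}$, so $o(F_*) \geq \sup \mathbb{P}$, as claimed. The conclusion does not depend on the exact value of $k$: running the theorem with $a(n) = n^k$ instead gives $\sup\{[n^{kt}]; t \in (0,\infty)\}$, which again equals $\sup \mathbb{P}$ because $\{kt : t > 0\} = (0,\infty)$. The only nonmechanical ingredient is the bound $k \geq 1$; everything else is a direct substitution into Theorem~\ref{thm:supremum-power-set-measure}, so I expect that to be the sole (and minor) obstacle.
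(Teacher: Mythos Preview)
Your proposal is correct and follows essentially the same route as the paper: the paper presents this corollary as an immediate consequence of Correa--de Paula's result $o(F)=[n^k]$ together with Theorem~\ref{thm:supremum-power-set-measure}, without further comment. Your extra care about $k\geq 1$ is reasonable but, as you suspected, unnecessary in context: the paper uses the convention $k\in\mathbb{N}$ with $k\geq 1$, and elsewhere (in the proof of Theorem~\ref{thm:morse-smale-measure-equality}) invokes exactly the argument you sketch---finite nonwandering set implies $o(F)\geq [n]$ via \cite[Corollary~1]{correa_pujals_2023}---so your justification is in line with the paper's own reasoning.
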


The simplicity of Morse-Smale diffeomorphisms is not surprising, especially for simple manifolds such as the circle. A natural question is whether the measure-induced map $F_*$ exhibits similarly simple behavior as a dynamical system. We know from Theorem \ref{thm:supremum-power-set-measure} that the growth of orbit separation for $F_*$ exceeds any polynomial growth. However, the following result states that for Morse-Smale dynamics in the circle, the generalized entropy cannot be greater than any abstract order of growth greater than $\sup\mathbb{P}$. \\

\begin{theorem}\label{thm:morse-smale-measure-equality}
    If $F: S^1 \to S^1$ is a Morse-Smale diffeomorphism defined on the circle, then $o(F_*) = \sup \mathbb{P}$.\\
\end{theorem}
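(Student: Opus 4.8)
The plan is to establish both inequalities. The lower bound $o(F_*) \geq \sup \mathbb{P}$ is immediate from Theorem \ref{thm:supremum-power-set-measure} together with the fact \cite[Theorem 4]{correa_pujals_2023} that a Morse--Smale diffeomorphism on $S^1$ has $o(F) = [n]$, so that $\mathbb{A} = \mathbb{P}$. The real content is the upper bound $o(F_*) \leq \sup \mathbb{P}$, i.e.\ bounding the spanning sets of $F_*$ on $\mathcal{M}(S^1)$ from above by something dominated by the power family of $[n]$.

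First I would reduce the dynamics on $\mathcal{M}(S^1)$ to a combinatorial model. A Morse--Smale diffeomorphism $F$ of $S^1$ has finitely many periodic points; after passing to a power (which only changes $o$ by a bounded factor, hence not at all at the level of orders of growth) we may assume all periodic points are fixed, alternating between sources and sinks, say $p_1, \dots, p_k$. Every point of $S^1$ other than a source converges under iteration to one of the sinks, and the $F$-orbit of any point visits at most a bounded number of "transition" scales before settling near its sink. Using the affine structure near each hyperbolic fixed point (linearizing, via a bi-Lipschitz conjugacy on a neighborhood), the forward orbit of a point at distance $\delta$ from a sink contracts geometrically, and from a source it takes $O(\log(1/\delta))$ iterates to leave a fixed neighborhood. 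The key structural fact I would extract is: for the base map, $\mathrm{Span}(F, n, \varepsilon)$ grows linearly in $n$, with the linear coefficient controlled by $\log(1/\varepsilon)$.

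Next I would transfer this to $\mathcal{M}(S^1)$. The idea is that an $(n,\varepsilon)$-spanning set for $F_*$ can be built by discretizing measures according to their mass distribution among the finitely many "basins" and the finitely many relevant scales near each fixed point, tracked over the first $n$ iterates. Concretely, I would fix a finite partition of $S^1$ into small arcs adapted to the linearizing coordinates, refine it dynamically up to time $n$ — by Morse--Smale-ness this dynamical refinement has only polynomially many (in $n$) atoms of non-negligible size — and then approximate an arbitrary $\mu \in \mathcal{M}(S^1)$ by assigning rational masses (on a grid of mesh $\sim \varepsilon$) to these atoms. The number of such discretized measures is of order $(\text{number of atoms})^{C/\varepsilon}$, i.e.\ polynomial in $n$ with exponent depending on $\varepsilon$; since the Wasserstein-type metric on $\mathcal{M}(S^1)$ that induces the weak$^*$ topology is compatible with this discretization, and $F_*$ does not expand it beyond the base-map distortion already accounted for, this yields $\mathrm{Span}(F_*, n, \varepsilon) \leq n^{\,C(\varepsilon)}$ for some constant $C(\varepsilon)$. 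Taking the sup over $\varepsilon$ gives $o(F_*) \leq \sup\{[n^{C(\varepsilon)}] : \varepsilon > 0\} = \sup \mathbb{P}$.

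The main obstacle I anticipate is the last transfer step: controlling the metric on $\mathcal{M}(S^1)$ precisely enough to see that the dynamical refinement of the adapted partition has only \emph{polynomially} many relevant atoms, and that no subtle accumulation of mass at intermediate scales (coming from orbits spending a long time near a source before falling into a sink) inflates the count beyond polynomial. This requires a careful, quantitative use of hyperbolicity of the fixed points — essentially the same estimate $o(F) = [n]$ that underlies the base case, but now applied uniformly across all "portions of mass" of a measure simultaneously. Once this uniform linear-in-$n$, $\log$-in-$\varepsilon$ estimate on the base dynamics is in hand, the combinatorial counting of discretized measures is routine.
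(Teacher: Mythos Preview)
Your lower bound matches the paper's. For the upper bound, however, the paper takes a completely different route: rather than building explicit spanning sets for $F_*$ by discretizing measures, it proves two structural facts. First (Lemma~\ref{lemma:morse-smale-gamma-equal-non-wandering}), after passing to a power so that all periodic points of $F$ are fixed, the nonwandering set $\Omega(F_*)$ coincides with the simplex $\Gamma(F_*)$ of convex combinations of the Dirac masses at the fixed points of $F$; in particular $\Omega(F_*) = \mathrm{Fix}(F_*)$. Second (Lemma~\ref{lemma:homeo-nonwandering-fix-points-order-bounded}), any homeomorphism $H$ of a compact metric space with $\Omega(H) = \mathrm{Fix}(H) \subsetneq X$ satisfies $o(H) \leq \sup\mathbb{P}$; this is obtained by collapsing $\mathrm{Fix}(H)$ to a single point and invoking a result of Correa--de~Paula on systems with a unique nonwandering point. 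The upper bound for $o(F_*)$ then follows immediately, with no counting at all.

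Your direct discretization strategy is plausible, but the step you call ``routine'' is precisely where the difficulty lives. Discretizing masses on a grid of mesh $\sim\varepsilon$ over $m$ atoms does give on the order of $m^{C/\varepsilon}$ configurations, and with $m$ polynomial in $n$ the count is fine. The problem is showing that this family is actually $(n,\varepsilon)$-\emph{spanning} for the Prohorov dynamical metric: if each of $m\sim n$ atom-masses is perturbed by $O(\varepsilon)$, the Prohorov distance between the original and the discretized measure can be of order $m\varepsilon \sim n\varepsilon$, not $\varepsilon$. To salvage this one must exploit the one-dimensional ordering of $S^1$ (a quantile/CDF-based discretization rather than a per-atom mass grid) and then verify compatibility with all iterates $F^i$ simultaneously. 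That is doable but is substantially more than a routine count; the paper's structural argument sidesteps the issue entirely and, via Lemma~\ref{lemma:homeo-nonwandering-fix-points-order-bounded}, yields a statement that does not rely on the one-dimensionality of the base.
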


The proof of the above result relies on the fact that, for such Morse-Smale dynamics, the nonwandering set of its measure-induced map is a finite-dimensional set of periodic points. It is worth asking whether the nature of the circle induces simpler dynamics in the space of probability measures in the sense that the order of growth cannot be greater than the supremum of the set of polynomial orders of growth for a given circle homeomorphism $T$.\\

\begin{question*}
    Is there a circle homeomorphism $T$ such that $o(T_*) > \sup\mathbb{P}$?\\
\end{question*}

Up to this point, this work has discussed in detail two different types of induced dynamics, $\induce{T}$ and $T_*$. This leads us to the following question: Are these two dynamics related to each other? The answer is affirmative in the context of generalized entropy, which is not surprising, since the induced hyperspace map exhibits significantly more dispersion of orbits than the measure-induced map. This becomes evident when we compare Corollary \ref{coro:order-growth-hyperspace-character} with Theorem \ref{thm:supremum-power-set-measure}. \\

\begin{theorem}\label{thm:homeo-inequality-hyperspace-measure}
    If $T: X \to X$ is an injective map of a compact metric space, then $o(T_\hyp{K}) \geq o(T_*)$.\\
\end{theorem}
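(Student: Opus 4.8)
The plan is to combine Corollary~\ref{coro:order-growth-hyperspace-character} with a covering-number estimate for $T_*$ built out of the base system. Write $o(S)=\sup_{\varepsilon>0}\bigl[\,n\mapsto\operatorname{Span}(S,n,\varepsilon)\,\bigr]$ for the generalized entropy of a compact system $S$. Corollary~\ref{coro:order-growth-hyperspace-character} says $o(\induce{T})=\sup\{[2^{\operatorname{Span}(T,n,\varepsilon)}]:\varepsilon>0\}$, and since a polynomial is dominated by an exponential we have $[\operatorname{Span}(T,n,\varepsilon)^{\,t}]\le[2^{\operatorname{Span}(T,n,\varepsilon)}]$ for every $\varepsilon>0$ and $t>0$. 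Thus it suffices to prove, for each $\varepsilon>0$, the bound $[\operatorname{Span}(T_*,n,\varepsilon)]\le[2^{\operatorname{Span}(T,n,\delta)}]$ for a suitable $\delta=\delta(\varepsilon)>0$: taking the supremum over $\varepsilon$ and using the Corollary then gives $o(T_*)\le o(\induce{T})$. On $\mathcal M(X)$ I would fix a Kantorovich--Rubinstein metric $W_1$, which is harmless since $o$ depends only on the uniform structure.

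The first concrete step is a quantization that pushes the problem onto the base. Fix $\varepsilon>0$, pick $\delta<\varepsilon/2$, let $F=\{x_1,\dots,x_S\}$ be a minimal $(n,\delta)$-spanning set of $T$ with $S=\operatorname{Span}(T,n,\delta)$, and take the Bowen-ball partition $A_1\sqcup\dots\sqcup A_S=X$ with $A_i\subseteq\bigcap_{0\le k\le n}T^{-k}B(T^kx_i,\delta)$. For $\mu\in\mathcal M(X)$ put $\Pi(\mu)=\sum_i\mu(A_i)\delta_{x_i}$; testing against $1$-Lipschitz functions gives $W_1(T_*^k\mu,T_*^k\Pi(\mu))\le\delta$ for all $0\le k\le n$. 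Hence $\operatorname{Span}(T_*,n,\varepsilon)$ is bounded by the number of $(n,\varepsilon/2)$-distinguishable $T_*$-orbit segments among the atomic measures $\Pi(\mu)$. Now injectivity enters: since $T$ keeps the atoms of $\Pi(\mu)$ distinct for every iterate, the $T_*$-orbit of $\Pi(\mu)$ is exactly the $T^{\times S}$-orbit of the configuration $(x_1,\dots,x_S)\in X^{S}$ decorated with the weight vector $(\mu(A_1),\dots,\mu(A_S))$. Consequently the measures carried by a fixed $k$-point configuration, for a fixed atom count $k$, form a $T_*$-invariant set which is a continuous equivariant factor of $(X^{k},T^{\times k})$, so its generalized entropy is at most $o(T^{\times k})\le\sup_{\varepsilon'}[2^{\operatorname{Span}(T,n,\varepsilon')}]=o(\induce{T})$ (using $\operatorname{Span}(T^{\times k},n,\varepsilon')\le\operatorname{Span}(T,n,\varepsilon')^{k}$). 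For non-injective $T$ the atoms may collide under iteration and this reduction fails, which is exactly why the hypothesis is needed.

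The step I expect to be the genuine obstacle is assembling these pieces into a uniform bound on $\operatorname{Span}(T_*,n,\varepsilon)$ itself — that is, controlling both the unbounded atom count $S=\operatorname{Span}(T,n,\delta)\to\infty$ and the continuous weight parameter. A direct estimate — discretizing the weight vector by an $\ell^1$-net of the $S$-dimensional simplex — only yields $\operatorname{Span}(T_*,n,\varepsilon)\lesssim(C/\varepsilon)^{\operatorname{Span}(T,n,\delta)}$, and the exponential constant here grows with $1/\varepsilon$, which the lattice of orders of growth need not absorb into $o(\induce{T})$. The resolution is to replace this crude count by the Bauer--Sigmund/Sigmund-type argument the paper already uses in the proof of Theorem~\ref{thm:supremum-power-set-measure} — the mechanism that forces the topological entropy of $T_*$ to vanish whenever that of $T$ does: at the scale $\varepsilon$ the base resolves into a bounded number of cells, and the pushed-forward mass profiles $k\mapsto T_*^k\Pi(\mu)$ are not arbitrary but orbit segments governed by the finitely many Bowen cells of $T$, so that the number of genuinely realizable, $\varepsilon/2$-distinguishable such profiles is at most $2^{\operatorname{Span}(T,n,\delta')}$ up to order of growth, for a suitable $\delta'=\delta'(\varepsilon)$. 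Transcribing that counting into the partially ordered set of orders of growth is the main work; once it is in hand, the reduction of the first paragraph together with Corollary~\ref{coro:order-growth-hyperspace-character} delivers $o(T_*)\le o(\induce{T})$.
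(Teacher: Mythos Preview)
Your strategy has a genuine gap at exactly the point you flag as ``the main work.'' The quantization $\Pi(\mu)=\sum_i\mu(A_i)\delta_{x_i}$ onto an $(n,\delta)$-spanning set is sound, and injectivity of $T$ does keep the atoms $T^kx_i$ distinct along the orbit. (The sentence about $k$-atom measures being ``a continuous equivariant factor of $(X^k,T^{\times k})$'' is off, though: that factor map only hits the equal-weight measures $\tfrac{1}{k}\sum\delta_{y_i}$, whereas your $\Pi(\mu)$ carries arbitrary real weights, so this step does not bound what you need.) You are also right that the crude simplex-net bound $\operatorname{Span}(T_*,n,\varepsilon)\lesssim(C/\varepsilon)^{\operatorname{Span}(T,n,\delta)}$ need not be absorbed by $o(\induce{T})=\sup_{\delta'}[2^{\operatorname{Span}(T,n,\delta')}]$: in general $[2^{c\,a(n)}]$ is strictly larger than $[2^{a(n)}]$ once $a(n)\to\infty$, and there is no reason the family $\{[2^{\operatorname{Span}(T,n,\delta')}]:\delta'>0\}$ should be cofinal with its constant-multiples in the exponent. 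But your proposed fix is not a fix. The ``Sigmund-type argument'' from Theorem~\ref{thm:supremum-power-set-measure} is Lemma~\ref{lemma:sigmund-78-square-separate}, which squares separated sets and hence gives \emph{lower} bounds for $o(T_*)$; and the Glasner--Weiss result you allude to is a qualitative zero-entropy statement, not a quantitative inequality $\operatorname{Span}(T_*,n,\varepsilon)\le 2^{\operatorname{Span}(T,n,\delta')}$ at the level of orders of growth. Your final paragraph asserts precisely the bound that needs to be proved, without supplying an argument.

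The paper avoids the simplex-counting problem by a different device: it encodes the weights as an additional spatial coordinate. For $\mu=\sum_i\tfrac{\chi(x_i)}{N}\delta_{x_i}\in\mathcal{G}_L$ (atomic with denominator at most $L$ and distinct atoms $x_i$) it sets $\Psi_L(\mu)=\{(x_i,\chi(x_i)/N):1\le i\le\ell\}\in\hyp{K}(X\times[0,1])$; injectivity of $T$ makes $\Psi_L$ intertwine $T_*$ with $(T\times\text{Id})_{\hyp{K}}$. Lemma~\ref{lemma:distance-image-measure-to-hyperspace} then shows $\rho(\mu,\lambda)\ge\varepsilon\Rightarrow d_H(\Psi_L\mu,\Psi_L\lambda)\ge\varepsilon/L$, so any $(n,\varepsilon)$-separated set for $T_*$ (after approximating each measure inside some $\mathcal{G}_L$) becomes an $(n,\varepsilon/ML)$-separated set for $(T\times\text{Id})_{\hyp{K}}$. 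Finally Lemma~\ref{lemma:order-hyperspace-product-identity} gives $o((T\times\text{Id})_{\hyp{K}})=o(T_{\hyp{K}})$, because the identity factor on $[0,1]$ contributes only a bounded multiplicative constant to $\operatorname{Span}$ and is therefore invisible in Corollary~\ref{coro:order-growth-hyperspace-character}. The weight data is thus absorbed by the trivial dynamics on $[0,1]$ rather than by a simplex count; that is the idea missing from your approach.
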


Observe that the support of any finite combination of atomic measures is an element of the hyperspace and that both elements are dense in their respective phase spaces. This indicates that the induced hyperspace map and the measure-induced map share similarities in their behavior on a dense subset. Hence, the above theorem is not particularly surprising. As a consequence, we can combine the above theorem with Theorem \ref{thm:supremum-power-set-measure} to obtain upper and lower bounds on the generalized entropy of the measure-induced map. \\

\begin{corollary}
    If $T: X \to X$ is an injective map of a compact metric space such that $[n] > o(T) \geq [a(n)]$, then $h(T_\hyp{K}) = 0$ and $o(T_\hyp{K}) \geq o(T_*) \geq \sup \mathbb{A}$, where $\mathbb{A} = \{[a(n)^t]; t \in (0, \infty)\}$.
\end{corollary}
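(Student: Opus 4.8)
The plan is to obtain the corollary by directly assembling three results already established in the paper, since the hypotheses have been chosen precisely so that each applies. First I would record that, by assumption, $o(T) < [n]$, so Theorem \ref{thm:sub-linear-generalized-zero-metric-mean-dim} applies verbatim and yields $h(\induce{T}) = 0$; this is exactly the first claimed conclusion, recalling that $h(\induce{T})$ and $h(T_\hyp{K})$ denote the same quantity. No computation is needed here beyond observing that the hypothesis $[n] > o(T)$ is literally the hypothesis of that theorem.

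Next I would use injectivity of $T$: Theorem \ref{thm:homeo-inequality-hyperspace-measure} gives $o(T_\hyp{K}) \geq o(T_*)$ in the partial order of $\ordgrow$. Separately, the hypothesis $o(T) \geq [a(n)]$ is exactly the hypothesis of Theorem \ref{thm:supremum-power-set-measure}, which produces $o(T_*) \geq \sup \mathbb{A}$ with $\mathbb{A} = \{[a(n)^t] \in \mathbb{O}; t \in (0, \infty)\}$. At this point one should briefly note that $\sup \mathbb{A}$ is taken in the complete set of orders of growth $\ordgrow$, so the inequality is meaningful even when the supremum is not realized as the generalized entropy of a single system.

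Finally I would chain the two inequalities using transitivity of the partial order on $\ordgrow$, obtaining $o(T_\hyp{K}) \geq o(T_*) \geq \sup \mathbb{A}$; combined with the first paragraph, this is the full statement. I do not expect a genuine obstacle here: the corollary is essentially a bookkeeping step that merges Theorems \ref{thm:sub-linear-generalized-zero-metric-mean-dim}, \ref{thm:supremum-power-set-measure}, and \ref{thm:homeo-inequality-hyperspace-measure}. The only points deserving a word of care are (i) checking that the two standing hypotheses $[n] > o(T)$ and $o(T) \geq [a(n)]$ are simultaneously consistent — they force $[a(n)] < [n]$, which is harmless — and (ii) ensuring all comparisons are read inside $\ordgrow$ rather than in the smaller set $\mathbb{O}$ of realizable orders, so that $\sup \mathbb{A}$ is a legitimate object on the right-hand side of the displayed chain.
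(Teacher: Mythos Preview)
Your proposal is correct and matches the paper's approach exactly: the corollary is stated immediately after Theorem \ref{thm:homeo-inequality-hyperspace-measure} as a direct consequence of combining Theorems \ref{thm:sub-linear-generalized-zero-metric-mean-dim}, \ref{thm:supremum-power-set-measure}, and \ref{thm:homeo-inequality-hyperspace-measure}, with no further argument given. Your remarks on reading the inequalities in $\ordgrow$ and on the consistency of the hypotheses are accurate and, if anything, more explicit than the paper itself.
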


%%%%%%%%%%%%%%%%%%%%%%%%%%%%%%%%%%%%%%%%%%%%%%%%%%%%%%%%%%%%%%%%%%%%%%%%%%%%%%%%%%%%

\subsection{Reading guide}

This paper is organized as follows. In Section \ref{sec:basic-def}, basic definitions are given to understand the theory discussed in this paper. Specifically, all definitions related to topological entropy are given in §\ref{subsec:def-top-entropy}. In §\ref{subsec:hyp-induced-sys} and §\ref{subsec:prob-space-induced}, the induced hyperspace map and the measure-induced map are defined, along with their respective phase spaces. In §\ref{subsec:morse-smale-diffeos}, a brief overview of Morse-Smale diffeomorphisms is provided. In Section \ref{sec:gen-entropy-hyperspace}, the proofs of Theorem \ref{thm:generalized-more-linear-infinite-metric-mean-dim} and Theorem \ref{thm:sub-linear-generalized-zero-metric-mean-dim} are presented, along with an important application of these results, illustrated by an example. A deep discussion about the generalized entropy of measure-induced systems is given in Section \ref{sec:gen-entropy-measure-induced}. Specifically, this section begins with some immediate results, and in §\ref{subsec:proof-thm-supremum-power-set-measure}, the proof of Theorem \ref{thm:supremum-power-set-measure} is presented. To conclude, the proofs of Theorem \ref{thm:morse-smale-measure-equality} and Theorem \ref{thm:homeo-inequality-hyperspace-measure} are given in §\ref{subsec:proof-thm-morse-smale-measure-equality} and §\ref{subsec:proof-thm-homeo-inequality-hyperspace-measure}, respectively.

%%%%%%%%%%%%%%%%%%%%%%%%%%%%%%%%%%%%%%%%%%%%%%%%%%%%%%%%%%%%%%%%%%%%%%%%%%%%%%%%%%%%

\section{Basic Definitions}\label{sec:basic-def}

In this work, $(X, d)$ is a compact metric space, and $T: X \to X$ is a continuous function. The function $T$ is often called a dynamical system. To avoid excessive use of parenthesis we will denote the iterates of $T$ to a given point $x \in X$ as $T^ix$, wherever $i \in \mathbb{N}$. A point $x \in X$ is called \emph{nonwandering} if, for every neighborhood $U$ of $x$, there is $n \in \mathbb{N}$ such that $T^n(U) \cap U \neq \varnothing$. Let $\Omega(T)$ denote the closed nonempty set of nonwandering points. A point $x \in X$ is a \emph{fixed point} of $T$ if $Tx = x$. Denote by $\text{Fix}(T)$ the set of all fixed points of $T$. If $A$ is a finite set, then $\#A$ will denote the cardinality of such set.

%%%%%%%%%%%%%%%%%%%%%%%%%%%%%%%%%%%%%%%%%%%%%%%%%%%%%%%%%%%%%%%%%%%%%%%%%%%%%%%%%%%%

\subsection{Definitions related to the topological entropy}\label{subsec:def-top-entropy}

For a given $n \in \mathbb{N}$, the \emph{dynamical distance} in $X$ is defined as 
\begin{equation*}
    d_n(x, y) = \max\{d(T^ix, T^iy); 0 \leq i < n\}.
\end{equation*}
It is not hard to see that $(X, d_n)$ is still a compact metric space. For $\varepsilon > 0$, we say that $A \subset X$ is a \emph{$(n, \varepsilon)$-separated set} if $d_n(x, y) \geq \varepsilon$ for any pair of distinct points $x, y \in A$. Note that the cardinality of $A$ is finite because $X$ is compact for the metric $d_n$. We denote by $\text{Sep}(T, n, \varepsilon)$ the maximal cardinality of a $(n, \varepsilon)$-separated subset of $X$. Also, we say that $E \subset X$ is a \emph{$(n, \varepsilon)$-spanning set} for $X$ if for any $x \in X$, there is $y \in E$ such that $d_n(x, y) < \varepsilon$. Let $\text{Span}(T, n, \varepsilon)$ be the minimum cardinality of any $(n, \varepsilon)$-spanning subset of $X$ that is also finite by the compactness of $X$. To simplify further notations, consider the following quantities
\begin{equation*}
    \text{Sep}(T, \varepsilon) = \limsup_{n \to \infty} \frac{\log \text{Sep}(T, n, \varepsilon)}{n} \text{ and } \text{Span}(T, \varepsilon) = \limsup_{n \to \infty} \frac{\log \text{Span}(T, n, \varepsilon)}{n}.
\end{equation*}

Observe that, for any given $\varepsilon > 0$, $\text{Sep}(T, n, \varepsilon)$ and $\text{Span}(T, \varepsilon)$ are non-decreasing sequences. The \emph{topological entropy} of a dynamical system $T$ is given by 
\begin{equation*}
    h(T) = \lim_{\varepsilon \to 0} \text{Sep}(T, \varepsilon) = \lim_{\varepsilon \to 0} \text{Span}(T, \varepsilon).
\end{equation*}
A complete exposition of the topological entropy is given in \cite[Chapter 3]{katok_hasselblatt_1995}. The \emph{lower metric mean dimension} is defined as 
\begin{equation*}
    \underline{\text{mdim}}(X, d, T) = \liminf_{\varepsilon \to 0} \frac{\text{Sep}(T, \varepsilon)}{\abs{\log \varepsilon}} = \liminf_{\varepsilon \to 0} \frac{\text{Span}(T, \varepsilon)}{\abs{\log \varepsilon}},
\end{equation*}
and the \emph{upper metric mean dimension} is defined as 
\begin{equation*}
    \overline{\text{mdim}}(X, d, T) = \limsup_{\varepsilon \to 0} \frac{\text{Sep}(T, \varepsilon)}{\abs{\log \varepsilon}} = \limsup_{\varepsilon \to 0} \frac{\text{Span}(T, \varepsilon)}{\abs{\log \varepsilon}}.
\end{equation*}
If both above values are equal, then the \emph{metric mean dimension} of a dynamical system $T: X \to X$ is given by $\text{mdim}(X, d, T) = \underline{\text{mdim}}(X, d, T) = \overline{\text{mdim}}(X, d, T)$. Observe that if the topological entropy of a function is finite, then its metric mean dimension is always zero \cite{lindenstrauss_weiss_2000}.

To define the generalized entropy, consider the set of non-decreasing sequences taking values in $[0, \infty)$,
\begin{equation*}
    \mathcal{O} = \{a: \mathbb{N} \to [0, \infty); a(n) \leq a(n + 1), \forall n \in \mathbb{N}\}.
\end{equation*}
In this set, define an equivalence relation in the following manner: given $a_1, a_2 \in \mathcal{O}$, then $a_1 \approx a_2$ if there exist positive real numbers $c$ and $C$ such that $c a_1(n) \leq a_2(n) \leq C a_1(n)$, for any $n \in \mathbb{N}$. In other words, this means that these two sequences have the same order of growth. By this equivalence relation, we define the quotient space $\mathbb{O} = \mathcal{O}/_\approx$ as the \emph{space of orders of growth}. The class associated with $a \in \mathcal{O}$ is denoted by $[a(n)]$. Furthermore, there is a way to define a partial order in $\mathbb{O}$: given $[a(n)], [b(n)] \in \mathbb{O}$, we say that $[a(n)] \leq [b(n)]$ if there exists $C > 0$ such that $a(n) \leq Cb(n)$, for all $n \in \mathbb{N}$. In addition, the constant class $[1]$ is denoted by $\ordzero$ and has the following property: for any other class $[a(n)]$, we have $[a(n)] \geq \ordzero$. Regarding the partial order, the following lemma is useful for the upcoming results.\\

\begin{lemma}[\cite{correa_pujals_2023}]\label{lemma:correa-pujals-comparison}
    The following are equivalent:
    \begin{enumerate}
        \item $[a_1(n)] \leq [a_2(n)];$
        \item $\liminf_n \frac{a_2(n)}{a_1(n)} > 0$;
        \item $\limsup_n \frac{a_1(n)}{a_2(n)} < \infty$;\\
    \end{enumerate}
\end{lemma}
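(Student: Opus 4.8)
The plan is to prove the cyclic chain $(1)\Rightarrow(2)\Rightarrow(3)\Rightarrow(1)$, each step being a direct translation of the corresponding statement into a boundedness property of the ratio sequences. Every element of $\mathcal{O}$ is non-decreasing and non-negative, so a sequence in $\mathcal{O}$ that is not identically zero is bounded below by a positive constant (its first nonzero value); discarding the trivial identically-zero case, I would work under the standing assumption that $a_1(n),a_2(n)>0$ for all $n$, so that every ratio below is well-defined and finite. The other ingredient is the elementary fact that a sequence of reals has finite $\limsup$ (resp.\ positive $\liminf$) if and only if it is bounded above (resp.\ bounded away from $0$) from some index $N$ on, and that the finitely many remaining terms $a_1(n)/a_2(n)$ with $n<N$ are then automatically finite and positive.

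For $(1)\Rightarrow(2)$: by definition of the partial order there is $C>0$ with $a_1(n)\le C\,a_2(n)$ for all $n$; dividing by $a_1(n)>0$ gives $a_2(n)/a_1(n)\ge 1/C$ for every $n$, so $\liminf_n a_2(n)/a_1(n)\ge 1/C>0$. For $(2)\Rightarrow(3)$: this is the reciprocal relation $\limsup_n \tfrac{a_1(n)}{a_2(n)}=\big(\liminf_n \tfrac{a_2(n)}{a_1(n)}\big)^{-1}$ with the conventions $1/0=\infty$ and $1/\infty=0$; equivalently, $\liminf_n a_2(n)/a_1(n)=L>0$ gives an $N$ with $a_1(n)/a_2(n)\le 2/L$ for $n\ge N$, hence $\limsup_n a_1(n)/a_2(n)\le 2/L<\infty$. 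For $(3)\Rightarrow(1)$: if $\limsup_n a_1(n)/a_2(n)=L'<\infty$, pick $N$ with $a_1(n)/a_2(n)\le L'+1$ for $n\ge N$ and set $C=\max\{\,L'+1,\ \max_{1\le n<N}a_1(n)/a_2(n)\,\}$; then $a_1(n)\le C\,a_2(n)$ for every $n$, that is, $[a_1(n)]\le[a_2(n)]$.

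I do not expect any real obstacle, since the whole argument is a matter of unwinding the definitions. The only point that deserves care is the passage from a tail estimate (valid for $n\ge N$) to an inequality valid for all $n\in\mathbb{N}$ in $(3)\Rightarrow(1)$, and symmetrically the use of an all-$n$ inequality to bound $a_2/a_1$ in $(1)\Rightarrow(2)$: this is exactly where one needs the representatives of $\mathcal{O}$ to be bounded away from $0$, so that the finitely many initial ratios cannot force the global constant to be infinite.
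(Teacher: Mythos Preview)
The paper does not supply its own proof of this lemma; it is quoted from Correa--Pujals and used as a black box. Your cyclic argument $(1)\Rightarrow(2)\Rightarrow(3)\Rightarrow(1)$ is the standard one and is correct once the representatives are strictly positive, as you assume.

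One small inaccuracy worth flagging: a non-decreasing sequence in $[0,\infty)$ that is not identically zero need not be bounded below by a positive constant at \emph{every} index---it may start with finitely many zeros---so your parenthetical ``its first nonzero value'' is only a lower bound for the tail. This actually matters for $(3)\Rightarrow(1)$ as literally stated: with $a_1\equiv 1$ and $a_2(1)=0$, $a_2(n)=1$ for $n\ge 2$, one has $\limsup_n a_1(n)/a_2(n)=1<\infty$, yet no $C>0$ gives $a_1(1)\le C\,a_2(1)$. In every application in this paper the sequences in play are $\text{Sep}(T,n,\varepsilon)$ or $\text{Span}(T,n,\varepsilon)$, which are always $\ge 1$, so your standing positivity hypothesis is harmless; just be aware that it is an additional hypothesis and not a consequence of membership in $\mathcal{O}$.
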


This partial order is enough for us to consider if the supremum, or the infimum, of a subset belongs to $\mathbb{O}$. Therefore, to take limits in the space of orders of growth, we consider $\ordgrow$ the Dedekind-MacNeille completion of $\mathbb{O}$. It is uniquely defined, and we consider that $\mathbb{O} \subset \ordgrow$. Further information on this topic can be found at \cite{correa_pujals_2023}.

Recall that, for any $\varepsilon > 0$, $\text{Span}(T, n, \varepsilon)$ is a non-decreasing sequence for $n$. Moreover, observe that $[\text{Span}(T, n, \varepsilon_1)] \geq [\text{Span}(T, n, \varepsilon_2)]$ if $\varepsilon_2 > \varepsilon_1$. The same happens with $[\text{Sep}(T, n, \varepsilon)]$ for different values of $\varepsilon$. Consider
\begin{equation*}
    \text{N}_T = \{[\text{Span}(T, n, \varepsilon)] \in \mathbb{O}; \varepsilon > 0\} \text{ and } \text{G}_T = \{[\text{Sep}(T, n, \varepsilon)] \in \mathbb{O}; \varepsilon > 0\},
\end{equation*}
then the \emph{generalized topological entropy} of $T$ is defined as 
\begin{equation*}
    o(T) = \sup \text{N}_T = \sup \text{G}_T \in \ordgrow.
\end{equation*}

\begin{remark}\label{rmk:generalized-entropy-non-compact}
    If a continuous function $T': \mathcal{X} \to \mathcal{X}$ is defined on a non-necessarily compact metric space $\mathcal{X}$, then we can still compute its generalized entropy in the following manner: Given $\varepsilon > 0$, $n \in \mathbb{N}$, and a compact set $K \subset \mathcal{X}$, say that $A_0 \subset K$ is a $(n, \varepsilon)$-separated set if, for $x \in A_0$, $\{y \in \mathcal{X}; d_n(x, y) < \varepsilon\} \cap A_0 = \{x\}$. Denote by $\text{Sep}(T', n, \varepsilon, K)$ the maximal cardinality of a $(n, \varepsilon)-$separated set, which is finite because $K$ is compact. Define $\text{G}_{T', K} = \{[\text{Sep}(T', n, \varepsilon, K)] \in \mathbb{O}; \varepsilon > 0\}$ and $u(T', K) = \sup \text{G}_{T', K} \in \ordgrow$. Finally, the generalized entropy of $T'$ is given by $o(T') = \sup \{u(T', K) \in \ordgrow; K \subset \mathcal{X} \text{ is compact}\}$.\\
\end{remark}

There is a way to recover the topological entropy of a map $T$ given that its generalized entropy is known. Consider the classes of the exponential orders of growth $[exp(tn)]$, for $t > 0$. The family of exponential orders of growth is given by the set $\mathbb{E} = \{[exp(tn)]; t \in (0, \infty)\} \subset \mathbb{O}$. Observe that $\sup\mathbb{E}$ and $\inf\mathbb{E}$ belong to $\ordgrow$ and are both abstract orders of growth that are not realizable by any sequence. Correa and Pujals define in \cite{correa_pujals_2023} a projection $\pi_\mathbb{E}: \ordgrow \to [0, \infty]$ such that $\pi_\mathbb{E}(o(T)) = h(T)$. 

If the topological entropy of $T$ is zero, then there is a way to project its generalized entropy on the family of polynomial orders of growth. Precisely, consider the \emph{polynomial entropy} of $T$ given by
\begin{equation*}
    h_{pol}(T) = \lim\limits_{\varepsilon \to 0} \limsup\limits_{n \to \infty} \frac{\log \text{Span}(T, n, \varepsilon)}{\log n}.
\end{equation*}
The family of polynomial orders of growth is given by $\mathbb{P} = \{[n^t] \in \mathbb{O}; t \in (0, \infty)\}$, and the projection is given by $\pi_{\mathbb{P}}(o(T)) = h_{pol}(T)$. In a general way, given an order of growth $[a(n)]$, one can construct the one-parameter family of orders of growth $\mathbb{A} = \{[a(n)^t] \in \mathbb{O}; t \in (0, \infty)\}$. Again, $\sup\mathbb{A}$ and $\inf\mathbb{A}$ belong to $\ordgrow$ and are both abstract orders of growth.\\

\begin{remark}\label{rmk:gen-entropy-not-depend-metric}
    The generalized entropy $o(T)$ does not depend on the metric endowed on the compact set $X$, as it remains the same for any two metrics $d_1$ and $d_2$ that generate the same topology. Indeed, recall that the identity map is a homeomorphism between $(X, d_1)$ and $(X, d_2)$ that conjugates $T$ to itself. Therefore, by Theorem 1 in \cite{correa_pujals_2023}, the generalized entropy does not change.\\
\end{remark}

%%%%%%%%%%%%%%%%%%%%%%%%%%%%%%%%%%%%%%%%%%%%%%%%%%%%%%%%%%%%%%%%%%%%%%%%%%%%%%%%%%%%

\subsection{Hyperspace and the induced system}\label{subsec:hyp-induced-sys}

Consider the set $\hyp{K}(X)$ of all non-empty and closed sets contained in $X$ equipped with the Hausdorff metric $d_H$, defined as 
\begin{equation*}
    d_H(Z, W) = \inf\{\varepsilon > 0; Z \subset W_\varepsilon \text{ and } W \subset Z_\varepsilon\},
\end{equation*}
where $W_\varepsilon = \bigcup_{x \in W} \{y \in X; d(x, y) < \varepsilon\}$, respectively $Z_\varepsilon$. It is well known that the \emph{hyperspace} $(\hyp{K}(X), d_H)$ is a compact and metric space \cite[Theorem 3.5]{illanes_nadler_1999}. Moreover, if $X$ is a locally connected metric continuum, then the hyperspace is an infinite-dimensional topological space homeomorphic to the Hilbert cube $[0, 1]^{\mathbb{Z}}$ (cf. \cite[Theorem 1]{curtis_schori_1974}). The \emph{induced hyperspace} dynamical system $\induce{T}: \hyp{K}(X) \to \hyp{K}(X)$ is given by $\induce{T}(W) := T(W)$. Note that $T(W)$ is closed in $X$ because $T$ is continuous, hence $\induce{T}$ is well defined. It is also well known that $\induce{T}$ is continuous, and a homeomorphism only if $T$ is a homeomorphism (cf. \cite[Remark 3]{arbieto_bohorquez_2023}).

%%%%%%%%%%%%%%%%%%%%%%%%%%%%%%%%%%%%%%%%%%%%%%%%%%%%%%%%%%%%%%%%%%%%%%%%%%%%%%%%%%%%

\subsection{Probability space and the measure-induced system}\label{subsec:prob-space-induced}

Consider $\mathcal{M}(X)$ the compact metrizable space of Birel probability measures when endowed with the weak-$*$ topology, or simply the \emph{probability space of $X$}. The \emph{measure-induced map} $T_*: \mathcal{M}(X) \to \mathcal{M}(X)$ is defined by the push-forward action $\mu \mapsto \mu\circ T^{-1}$. We will discuss about an example of a metric for $\mathcal{M}(X)$ which generates the weak-$*$ topology. Given the Borel $\sigma$-algebra $\mathcal{B}_X$ on $X$, consider the Prohorov metric $\rho$ defined by
\begin{equation*}
    \rho(\mu, \nu) = \inf \{\delta > 0; \mu(A) \leq \nu(A_\delta) + \delta, \forall A \in \mathcal{B}_X\},
\end{equation*}
for given $\mu, \nu \in \mathcal{M}(X)$, where $A_\delta = \bigcup_{x \in W} \{y \in X; d(x, y) < \delta\}$. For a sequence $(\mu_n)_{n \in \mathbb{N}}$ in $\mathcal{M}(X)$, the well-known Portmanteau theorem (cf. \cite[Theorem 2.1]{billingsley_1968}) guarantee that the following assertions are equivalent:
\begin{enumerate}
    \item $\lim_{n \to \infty}\rho(\mu_n, \mu) = 0$;
    \item $\limsup_{n \to \infty} \mu_n(C) \leq \mu(C)$, for all closed sets $C$ of $X$;
    \item $\liminf_{n \to \infty} \mu_n(U) \geq \mu(U)$, for all open sets $U$ of $X$.
\end{enumerate}

\begin{remark}\label{rmk:open-set-weak-star-topology}
These equivalences implies that, for an open set $U \subset X$ and some $\varepsilon > 0$, $\mathcal{W} = \{\mu \in \mathcal{M}(X); \mu(U) > \varepsilon\}$ is an open set. Indeed, if $\mu_n \to \mu$ and $\mu_n \in \mathcal{M}(X)\setminus\mathcal{W}$, for sufficiently large $n$, then $\mu_n(U) \leq \varepsilon$ and hence $\mu(U) \leq \varepsilon$, that is, $\mu$ is not an element of $\mathcal{W}$.\\
\end{remark}

Other ways to define metrics that are compatible with the weak-$*$ topology will be discussed later. For a more in-depth discussion on this topic, see \cite{billingsley_1968}. Moreover, note that the generalized entropy of $T_*$ does not depend on the metric that generates the weak-$*$ topology, as explained in Remark \ref{rmk:gen-entropy-not-depend-metric}.

%%%%%%%%%%%%%%%%%%%%%%%%%%%%%%%%%%%%%%%%%%%%%%%%%%%%%%%%%%%%%%%%%%%%%%%%%%%%%%%%%%%%

\subsection{Morse-Smale diffeomorphisms}\label{subsec:morse-smale-diffeos}

Given a $m$-dimensional compact and connected smooth manifold without boundary $N^m$, the set of $C^r$-diffeomorphisms endowed with the $C^r$ topology is denoted by $\text{Diff}^r(N^m)$, for $r \geq 1$. A periodic point $p \in N^m$ of period $k \geq 1$ for $F \in \text{Diff}^r(N^m)$ is called \emph{hyperbolic} if the derivative $(DF^k)_p$ has its eigenvalues different than $1$ in modulus. It is well known in this case that there exist stable and unstable $C^r$ manifolds of $p$, denoted by $W^s(p)$ and $W^u(p)$, respectively. Also, a hyperbolic periodic point is called an \emph{expanding periodic point} if all its eigenvalues are greater than $1$ in modulus, and a \emph{contracting periodic point} if all its eigenvalues are smaller than $1$ in modulus. \\

Say that $F \in \text{Diff}^r(N^m)$ is \emph{Morse-Smale} if the following conditions are satisfied:

\begin{enumerate}
    \item The set of nonwandering points $\Omega(F)$ have only a finite number of hyperbolic periodic points;
    \item The stable and unstable manifolds of the periodic points are all transversal to each other.
\end{enumerate}

If $N^m$ is equal to the circle $S^1$, then the following are true:
\begin{enumerate}
    \item[3.] There exist only expanding and contracting periodic points, and they alternate;
    \item[4.] If $F$ is orientation preserving, all periodic points have the same period, and if $F$ is orientation reversing, all periodic points have period one or two.
\end{enumerate}

For a detailed exposition on Morse-Smale diffeomorphisms, see \cite{arbieto_bohorquez_2023}.

%%%%%%%%%%%%%%%%%%%%%%%%%%%%%%%%%%%%%%%%%%%%%%%%%%%%%%%%%%%%%%%%%%%%%%%%%%%%%%%%%%%%

\section{Generalized entropy of the induced hyperspace system}\label{sec:gen-entropy-hyperspace}

In this section, we investigate how the generalized entropy of the base system $T$ relates to the complexity of the induced system $\induce{T}$. The proofs of the first two theorems are given in this section. Moreover, we show how our results simplify the calculation of the topological entropy of the induced systems in an important example.

In the following, we will show the contrapositive statement that if $o(T) > [n]$, then $\overline{\text{mdim}}(\hyp{K}(X), d_H, \induce{T})$ is infinite.

\begin{proof}[Proof of Theorem \ref{thm:generalized-more-linear-infinite-metric-mean-dim}]
Suppose that $\overline{\text{mdim}}(\hyp{K}(X), d_H, \induce{T})$ is finite, then there is $M \in \mathbb{N}$ such that $\overline{\text{mdim}}(\hyp{K}(X), d_H, \induce{T}) \leq M$. This implies that there is $\varepsilon_0 > 0$ such that for all $\varepsilon \in (0, \varepsilon_0)$,
    \begin{equation*}
        \frac{\text{Span}(\induce{T}, \varepsilon/2)}{-\log(\varepsilon/2)} \leq M.
    \end{equation*}
    By the definition of topological entropy, we have that 
    \begin{equation*}
        (-\log\varepsilon + \log 2)\cdot M \geq \text{Span}(\induce{T}, \varepsilon/2) = \limsup\limits_{n \to \infty} \frac{\log \text{Span}(\induce{T}, n, \varepsilon/2)}{n}.
    \end{equation*}
    Thus, there is $c > 1$ and $N_0 \in \mathbb{N}$ such that for all $\ell \geq N_0$,
    \begin{equation*}
        (-\log\varepsilon + \log 2)\cdot M \cdot c \geq \frac{\log \text{Span}(\induce{T}, n, \varepsilon/2)}{n}.
    \end{equation*}
    Lemma 4.1 in \cite{lacerda_romana_2024} implies that
    \begin{equation*}
        (-\log\varepsilon + \log 2)\cdot M \cdot c \cdot n \geq \log \text{Span}(\induce{T}, n, \varepsilon/2) \geq \log(2^{\text{Sep}(T, n, \varepsilon)} - 1).
    \end{equation*}
    Observe that $\text{Sep}(T, n, \varepsilon) \geq 1$. Since $t - 1 \geq t/2$, when $t \geq 2$, we get 
    \begin{equation*}
        (-\log\varepsilon + \log 2)\cdot M \cdot c \cdot n \geq \log(2^{\text{Sep}(T, n, \varepsilon)} - 1) \geq \log(2^{\text{Sep}(T, n, \varepsilon) - 1}) = (\text{Sep}(T, n, \varepsilon) - 1)\cdot \log 2.
    \end{equation*}
    Thus, 
    \begin{equation*}
        (-\log\varepsilon + \log 2)\cdot M \cdot c \cdot n \geq \text{Sep}(T, n, \varepsilon)\cdot \frac{\log 2}{2}.
    \end{equation*}
    That is, for every $\varepsilon > 0$, there is $K \in \mathbb{N}$ such that $K \cdot n \geq \text{Sep}(T, n, \varepsilon)$, for all $n \in \mathbb{N}$. Therefore, $[n] \geq o(T)$.
\end{proof}

To prove Theorem \ref{thm:sub-linear-generalized-zero-metric-mean-dim}, we need the following lemma.\\

\begin{lemma}\label{lemma:span_hyperspace_bounded_exponential}
    Given $(X, d)$ a compact metric space and $T: X \to X$ a continuous map, then $\emph{\text{Span}}(\induce{T}, n, \varepsilon) \leq 2^{\emph{\text{Span}}(T, n, \varepsilon)}$ for all $n \in \mathbb{N}$ and $\varepsilon > 0$.
\end{lemma}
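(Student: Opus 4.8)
The plan is to produce, for each $n\in\mathbb{N}$ and $\varepsilon>0$, an explicit $(n,\varepsilon)$-spanning set for $\induce{T}$ on $\hyp{K}(X)$ whose cardinality is at most $2^{\text{Span}(T,n,\varepsilon)}$, assembled from the subsets of a minimal spanning set of the base map. Concretely, fix a finite $(n,\varepsilon)$-spanning set $E\subseteq X$ for $T$ with $\#E=\text{Span}(T,n,\varepsilon)$, and let $\mathcal{E}$ be the family of all nonempty subsets of $E$. Each element of $\mathcal{E}$ is finite, hence closed, hence a genuine point of $\hyp{K}(X)$, and $\#\mathcal{E}=2^{\#E}-1\le 2^{\text{Span}(T,n,\varepsilon)}$. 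It then remains to check that $\mathcal{E}$ is $(n,\varepsilon)$-spanning for $\induce{T}$ with respect to the dynamical Hausdorff metric $(d_H)_n(A,B)=\max_{0\le i<n} d_H(\induce{T}^{\,i}A,\induce{T}^{\,i}B)=\max_{0\le i<n} d_H(T^iA,T^iB)$, which will give $\text{Span}(\induce{T},n,\varepsilon)\le\#\mathcal{E}\le 2^{\text{Span}(T,n,\varepsilon)}$.

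For the spanning claim, fix $W\in\hyp{K}(X)$. For $x\in W$ put $r(x)=\min_{y\in E}d_n(x,y)$; since $E$ is $(n,\varepsilon)$-spanning we have $r(x)<\varepsilon$ for every $x$, and $r$ is continuous, being the minimum over the finite set $E$ of the continuous maps $x\mapsto d_n(x,y)$. By compactness of $W$ the number $r^\ast:=\max_{x\in W}r(x)$ is attained and satisfies $r^\ast<\varepsilon$. Define $F_W=\{y\in E:\ d_n(x,y)\le r^\ast\ \text{for some}\ x\in W\}$, which is a nonempty subset of $E$, hence an element of $\mathcal{E}$. Now fix $0\le i<n$. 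Given $x\in W$, pick $y\in E$ with $d_n(x,y)=r(x)\le r^\ast$; then $y\in F_W$ and $d(T^ix,T^iy)\le d_n(x,y)\le r^\ast$, so $T^iW\subseteq (T^iF_W)_\delta$ for every $\delta>r^\ast$. Conversely, given $y\in F_W$ there is $x\in W$ with $d_n(x,y)\le r^\ast$, so $d(T^iy,T^ix)\le r^\ast$ and $T^iF_W\subseteq (T^iW)_\delta$ for every $\delta>r^\ast$. Hence $d_H(T^iW,T^iF_W)\le r^\ast<\varepsilon$ for all $i$, so $(d_H)_n(W,F_W)\le r^\ast<\varepsilon$, i.e. $W$ is $(n,\varepsilon)$-spanned by $F_W\in\mathcal{E}$.

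The core of the argument — that the nonempty subsets of a spanning set of the base form a spanning set of the hyperspace — is essentially a direct unwinding of the Hausdorff metric. The only genuinely delicate point is obtaining the \emph{strict} inequality $(d_H)_n(W,F_W)<\varepsilon$ demanded by the definition of a spanning set, rather than merely $\le\varepsilon$; this is exactly where compactness of $W$ enters, through the uniform estimate $r^\ast<\varepsilon$. I expect this to be the only step requiring any care.
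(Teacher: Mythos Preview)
Your proof is correct and follows essentially the same approach as the paper: both take a minimal $(n,\varepsilon)$-spanning set $E$ for $T$ and show that the family of its nonempty subsets is $(n,\varepsilon)$-spanning for $\induce{T}$ in $\hyp{K}(X)$. The only cosmetic difference is how the approximating subset is selected---the paper picks a subset $F\subset E$ of \emph{minimal cardinality} covering $A$ (so that each $y\in F$ is genuinely needed, giving the reverse inclusion), whereas you define $F_W$ via the uniform radius $r^\ast<\varepsilon$; your treatment of the strict inequality is in fact more explicit than the paper's.
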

\begin{proof}
    Suppose that $E \subset X$ is a $(n, \varepsilon)$-spanning set of minimal cardinality for the dynamical system $T: X \to X$. We want to prove that there is a subset $I \subset \hyp{K}(E) = \{F \subset E; F \neq \varnothing \text{ and } F \text{ is closed}\}$ such that $I$ is a $(n, \varepsilon)$-spanning set for $\induce{T}: \hyp{K}(X) \to \hyp{K}(X)$.

    Given $A \in \hyp{K}(X)$, consider $F \in \hyp{K}(E)$ such that the cardinality of $F$ is minimal and for any $x \in A$, there is $y \in F$ where $d_n(x, y) < \varepsilon$. Such $F$ exists because it is a non-empty subset of the $(n, \varepsilon)$-spanning set $E$. Therefore, 
    \begin{equation*}
        A \subset B^H_{(n, \varepsilon)}(F) := \{C \in \hyp{K}(X); C \subset \bigcup_{x \in F} B_{(n, \varepsilon)}(x) \text{ and } C \cap B_{(n, \varepsilon)}(x), \forall x \in F\},
    \end{equation*}
    where $B_{(n, \varepsilon)}(x) = \{y \in X; d_n(x, y) < \varepsilon\}$. Observe that $F$ needs to be minimal in cardinality to $A \cap B_{(n, \varepsilon)}(x)$, for all $x \in F$. Therefore, $D_n(A, F) < \varepsilon$, where $D_n$ is the dynamical Hausdorff distance defined as $D_n(A, F) = \max\{ d_H(\induce{T}^iA, \induce{T}^iF), 0 \leq i \leq n - 1\}$, see \cite[Chapter 4]{lacerda_romana_2024}. In other words, for any $A \in \hyp{K}(X)$, there is $F \in \hyp{K}(E)$ such that $D_n(A, F) < \varepsilon$. Since the cardinality of $\hyp{K}(E)$ is $2^{\text{Span}(T, n, \varepsilon)} - 1$, the lemma is true.
\end{proof}

Finally, we can use the above result to show that if $o(T) < [n]$, then $h(\induce{T}) = 0$.

\begin{proof}[Proof of Theorem \ref{thm:sub-linear-generalized-zero-metric-mean-dim}]
    By Lemma \ref{lemma:span_hyperspace_bounded_exponential}, we have that
    \begin{equation*}
        \frac{\log \text{Span}(\induce{T}, n, \varepsilon)}{n} \leq \frac{\text{Span}(T, n, \varepsilon)}{n}\cdot \log 2.
    \end{equation*}
    Given that $o(T) < [n]$, then, for any $\varepsilon > 0$, $\lim_{n \to \infty} \frac{\text{Span}(T, n, \varepsilon)}{n} = 0$, by Lemma \ref{lemma:correa-pujals-comparison}. Thus, $h(\induce{T}) = 0$.
\end{proof}

A direct consequence of the inequality in Lemma 4.1 in \cite{lacerda_romana_2024}, which shows that $\text{Span}(\induce{T}, k, \varepsilon/2) \geq 2^{\text{Sep}(T, k, \varepsilon)} - 1$, and Lemma \ref{lemma:span_hyperspace_bounded_exponential} is that:
\begin{enumerate}
    \item $o(\induce{T}) = \sup \{[\text{Span}(T_\hyp{K}, n, \varepsilon)] \in \mathbb{O}; \varepsilon > 0\} \geq \sup \{[2^{\text{Span}(T, n, \varepsilon)}] \in \mathbb{O}; \varepsilon > 0\}$, because $\text{Sep}(T, k, \varepsilon) \geq \text{Span}(T, k, \varepsilon)$;
    \item And that $\sup \{[2^{\text{Span}(T, n, \varepsilon)}] \in \mathbb{O}; \varepsilon > 0\} \geq \sup \{[\text{Span}(T_\hyp{K}, n, \varepsilon)] \in \mathbb{O}; \varepsilon > 0\} = o(\induce{T})$.
\end{enumerate}
Since $\ordgrow$ has a partial order, the characterization of the order of growth of the induced hyperspace map given by Corollary \ref{coro:order-growth-hyperspace-character} is true, that is,
\begin{equation}\label{eq:order-growth-hyperspace-character}
    o(T_\hyp{K}) = \sup \{[2^{\text{Span}(T, n, \varepsilon)}] \in \mathbb{O}; \varepsilon > 0\}.
\end{equation}

To prove Corollary \ref{coro:order-growth-hyperspace-iff}, first note that if $o(T) = \ordzero$, then there is $C > 0$ such that $\text{Span}(T, n, \varepsilon) < C$, for all $\varepsilon > 0$. Therefore, applying the above equality, $o(T_\hyp{K}) = \ordzero$. On the other hand, observe that we always have $o(T_\hyp{K}) \geq o(T)$. Indeed, consider $\Gamma: X \to \hyp{K}_1(X)$ defined as $\Gamma(x) = \{x\}$, where $\hyp{K}_1(X) = \{\{x\}; x \in X\}$, then $\Gamma$ is a homeomorphism and a conjugation between $T$ and ${T_\hyp{K}|}_{\hyp{K}_1(X)}$. Thus, by Theorem 1 in \cite{correa_pujals_2023}, $o(T_\hyp{K}) \geq o({T_\hyp{K}|}_{\hyp{K}_1(X)}) = o(T)$. Therefore, if $o(T_\hyp{K}) = \ordzero$, then $o(T) = \ordzero$.\\

\begin{example}
    Given the full shift with two symbols $(\Sigma_2, \sigma)$, consider the subshift $S$ given by the closure of the orbit of the point $x$ where the symbol $1$ only appears once. That is, for $x = (..., 0, 0, 0, 1, 0, 0, 0, ...)$,
    \begin{equation*}
        S = \overline{\bigcup_{n \in \mathbb{Z}} \{\sigma^n(x)\}}.
    \end{equation*}
    It was proved by D. Kwietniak and P. Oprocha in \cite[Theorem 13]{kwietniak_oprocha_2007} that the topological entropy of the induced hyperspace map $\sigma_\hyp{K}$ restricted to $\hyp{K}(S)$ is equal to $\log2$. In their proof, they construct a finite-to-one semiconjugacy from the induced map to the full shift with two symbols. In our case, we only need to apply Corollary \ref{coro:order-growth-hyperspace-character}. Indeed, if $|\mathcal{B}_n(S)|$ is the number of admissible words of size $n$ in $S$, then it is not hard to see that $|\mathcal{B}_n(S)| = n + 1$. Moreover, using $\sigma$ to denote $\sigma|_S$, it is well-known that $\emph{\text{Span}}(\sigma, n, 2^{-k}) = |\mathcal{B}_{n + 2k}(S)|$, for $k \in \mathbb{N}$ (see \cite[Chapter 6]{lind_marcus_1995}). Thus, applying Equation (\ref{eq:order-growth-hyperspace-character}), we obtain 
    \begin{equation*}
        o(\sigma_\hyp{K}) = \sup \{[2^{n + 2k + 1}]; k \in \mathbb{N}\} = [2^n] = [e^{(\log 2) \cdot n}].
    \end{equation*}
    Therefore, $h(\sigma_\hyp{K}) = \log 2$.
\end{example}

%%%%%%%%%%%%%%%%%%%%%%%%%%%%%%%%%%%%%%%%%%%%%%%%%%%%%%%%%%%%%%%%%%%%%%%%%%%%%%%%%%%%

\section{Generalized entropy of the measure-induced system}\label{sec:gen-entropy-measure-induced}

In this section, we study the measure-induced system $T_*$ defined on the space of probability measures $\mathcal{M}(X)$. First, we prove some properties that relate the generalized entropy of $T_*$ to the generalized entropy of $T$. After that, we prove the main theorems on this subject. \\

\begin{proposition}\label{prop:measure-induced-properties}
    If $T: X \to X$ is a continuous map defined on a compact metric space $(X, d)$, then the following holds:
    \begin{enumerate}
        \item[(1)] $o(T_*) \geq o(T)$;
        \item[(2)] If $\Omega(T_*) = \mathcal{M}(X)$, then $\Omega(T) = X$;
        \item[(3)] $o(T) = \emph{\ordzero}$ if and only if $o(T_*) = \emph{\ordzero}$.
    \end{enumerate}
\end{proposition}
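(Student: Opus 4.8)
The plan is to treat the three items in turn, using throughout the canonical embedding $\iota\colon X\hookrightarrow\mathcal M(X)$, $\iota(x)=\delta_x$. This map is a continuous injection of a compact space, hence a homeomorphism onto the compact (therefore closed) set $\mathcal M_0(X)=\{\delta_x:x\in X\}$, and since $T_*\delta_x=\delta_{Tx}$ it conjugates $T$ with the restriction $T_*|_{\mathcal M_0(X)}$. For item (1), I note that any $(n,\varepsilon)$-separated subset of $\mathcal M_0(X)$ is also $(n,\varepsilon)$-separated in $\mathcal M(X)$, so $\text{Sep}(T_*|_{\mathcal M_0(X)},n,\varepsilon)\le\text{Sep}(T_*,n,\varepsilon)$ for all $n$ and $\varepsilon$; therefore $o(T)=o(T_*|_{\mathcal M_0(X)})\le o(T_*)$ by the conjugacy invariance of generalized entropy from \cite{correa_pujals_2023}. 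This is the same argument used for $\induce{T}$ just after \eqref{eq:order-growth-hyperspace-character}.

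For item (2), let $x\in X$ and let $U$ be an open neighborhood of $x$. By Remark \ref{rmk:open-set-weak-star-topology}, $\mathcal V=\{\mu\in\mathcal M(X):\mu(U)>1/2\}$ is open, and $\delta_x\in\mathcal V$ since $\delta_x(U)=1$. As $\delta_x\in\Omega(T_*)=\mathcal M(X)$, there exist $n\ge1$ and $\mu\in\mathcal V$ with $T_*^n\mu\in\mathcal V$, so that $\mu(U)>1/2$ and $\mu(T^{-n}U)=(T_*^n\mu)(U)>1/2$; hence $\mu(U\cap T^{-n}U)>0$ and in particular $U\cap T^{-n}U\neq\varnothing$. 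Any $z$ in this intersection satisfies $z\in U$ and $T^nz\in U$, so $T^n(U)\cap U\neq\varnothing$ and $x\in\Omega(T)$; as $x$ was arbitrary, $\Omega(T)=X$.

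Item (3) is where the work lies. The implication $o(T_*)=\ordzero\Rightarrow o(T)=\ordzero$ is immediate from (1), since $\ordzero$ is the least element of $\ordgrow$. For the converse, I would first record that $o(T)=\ordzero$ is equivalent to $\text{Span}(T,n,\varepsilon)$ being bounded in $n$ for every $\varepsilon>0$, and that this in turn is equivalent to $(X,T)$ being equicontinuous; the nontrivial direction I would get by a compactness argument, extracting from $(n,\varepsilon)$-spanning sets of uniformly bounded size a finite family of orbits that $\varepsilon$-shadow every orbit for all time, so that the orbit-metric $\rho_\infty(x,y)=\sup_{i\ge0}d(T^ix,T^iy)$ is totally bounded — being also complete and lower semicontinuous with respect to $d$, it then makes $(X,\rho_\infty)$ compact, the identity $(X,\rho_\infty)\to(X,d)$ a homeomorphism, and hence $\rho_\infty$ a generator of the original topology, which is precisely equicontinuity (one could instead cite this characterization from \cite{correa_pujals_2023}). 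It then suffices to check that $T$ equicontinuous implies $T_*$ equicontinuous, which I would do directly with the Prohorov metric: if $d(x,y)<\delta_0$ forces $d(T^ix,T^iy)<\varepsilon$ for all $i$, then $(T^{-n}A)_{\delta_0}\subseteq T^{-n}(A_\varepsilon)$ for every Borel $A$ and every $n$, whence $\rho(\mu,\nu)<\min(\delta_0,\varepsilon)$ gives $\rho(T_*^n\mu,T_*^n\nu)\le\varepsilon$ for all $n$; applying the easy implication to $T_*$ then yields $o(T_*)=\ordzero$.

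The main obstacle is exactly this chain in item (3): identifying vanishing generalized entropy with equicontinuity, and transferring equicontinuity from $T$ to $T_*$. Items (1) and (2) are routine once the Dirac embedding and Remark \ref{rmk:open-set-weak-star-topology} are available.
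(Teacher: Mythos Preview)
Your proof is correct, and items (1) and (3) match the paper's approach closely: the Dirac embedding plus conjugacy invariance from \cite{correa_pujals_2023} for (1), and for (3) the characterization $o(T)=\ordzero\Leftrightarrow T$ equicontinuous (the paper cites Theorem~3 of \cite{correa_pujals_2023}, calling it Lyapunov stability) together with the fact that equicontinuity passes from $T$ to $T_*$ (the paper cites \cite{sigmund_1978}, while you supply the Prohorov computation directly).

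Item (2) is where you genuinely diverge. The paper invokes a result of Sigmund \cite{sigmund_1978} stating that every $\mu\in\Omega(T_*)$ satisfies $\mu(\Omega(T))\geq 1/2$, and then specializes to $\mu=\delta_x$. Your argument is more elementary and self-contained: using only Remark~\ref{rmk:open-set-weak-star-topology} and the definition of nonwandering, you exhibit for each neighborhood $U$ of $x$ a measure $\mu$ with $\mu(U)>1/2$ and $\mu(T^{-n}U)>1/2$, whence $U\cap T^{-n}U\neq\varnothing$. This avoids the external citation entirely and uses nothing beyond what the paper has already set up; the price is that it does not recover Sigmund's stronger quantitative statement, but that statement is not needed here.
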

\begin{proof}
    We will prove each item:
    \begin{enumerate}
        \item[(1)] Consider $\mathcal{M}_1(X)$ the set of atomic measures $\delta_x$, that is a compact set and invariant by $T_*$, then $\Phi: \mathcal{M}_1(X) \to X$ defined by $\Phi(\delta_x) = x$ is an isometry. Clearly, $\Phi$ is a homeomorphism and conjugates $T_*$ to $T$. Therefore, by Theorem 1 in \cite{correa_pujals_2023},  
    \begin{equation*}
        o(T_*) \geq o(T_*|_{\mathcal{M}_1(X)}) = o(T).
    \end{equation*}
        \item[(2)] Given $\mu \in \mathcal{M}(X)$, $\mu$ is a nonwandering point for $T_*$, then $\mu(\Omega(T)) \geq 1/2$, by a Proposition in \cite{sigmund_1978}. This means that for any $x \in X$, the atomic measure $\delta_x$ is such that $\delta_x(\Omega(T)) = 1$. Therefore, $x \in \Omega(T)$.

        \item[(3)] By Theorem 3 in \cite{correa_pujals_2023}, $o(T) = \ordzero$ if and only if $T$ is Lyapunov stable. Moreover, K. Sigmund proved in \cite{sigmund_1978} that if $T$ is Lyapunov stable, so is $T_*$. Therefore, by applying J. Correa and E. Pujals' theorem again: if $o(T) = \ordzero$, then $o(T_*) = \ordzero$. The converse is trivial by item (1).
    \end{enumerate}
\end{proof}

%%%%%%%%%%%%%%%%%%%%%%%%%%%%%%%%%%%%%%%%%%%%%%%%%%%%%%%%%%%%%%%%%%%%%%%%%%%%%%%%%%%%

\subsection{Proof of Theorem \ref{thm:supremum-power-set-measure}}\label{subsec:proof-thm-supremum-power-set-measure}

Consider the Banach space of continuous functions defined on $X$ taking values in $\mathbb{R}$, denoted by $C^0(X)$, endowed with the norm $d_0$ defined as $d_0(f, g) = \max_{x \in X} \lvert f(x) - g(x)\rvert$, where $\lvert\; .\; \rvert$ is the Euclidean norm on $\mathbb{R}$. For $(f_k)_{k \in \mathbb{N}}$ a dense sequence in the unit ball of $C^0(X)$, define the following metric on $\mathcal{M}(X)$:
\begin{equation*}
    d_\infty(\mu, \nu) = \sum\limits_{k \in \mathbb{N}} \frac{\lvert \int f_kd\mu - \int f_kd\nu\rvert}{2^k},
\end{equation*}
where $\mu, \nu \in \mathcal{M}(X)$. It is well known that this metric induces the weak-$*$ topology on $\mathcal{M}(X)$, see \cite{sigmund_1978}.\\

\begin{lemma}\label{lemma:sigmund-78-square-separate}
    For every $\varepsilon > 0$ and $n \in \mathbb{N}$, there exists $\delta > 0$, less than $\varepsilon$, such that for all $\varepsilon_0 \in (0, \delta)$, $\emph{Sep}(T_*, n, \varepsilon_0) \geq \emph{Sep}(T_*, n, \varepsilon)^2$.
\end{lemma}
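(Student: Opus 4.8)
The plan is to exploit a ``doubling'' construction: from a maximal $(n,\varepsilon)$-separated set $\{\mu_1,\dots,\mu_m\}$ in $\mathcal{M}(X)$ (with $m = \text{Sep}(T_*,n,\varepsilon)$), build $m^2$ measures of the form $\nu_{i,j} = \tfrac12 \mu_i + \tfrac12 \mu_j$ (suitably modified, see below) and show that, once the separation scale is taken small enough depending only on $\varepsilon$ and $n$, these $m^2$ measures are pairwise $(n,\varepsilon_0)$-separated for every $\varepsilon_0 < \delta$. This is precisely the mechanism used by Sigmund in \cite{sigmund_1978} to prove that $h(T_*)$ is either $0$ or $\infty$, and since $T_*$ is affine and the metric $d_\infty$ is built from linear functionals, convex combinations interact well with the dynamics: $T_*^k \nu_{i,j} = \tfrac12 T_*^k\mu_i + \tfrac12 T_*^k\mu_j$.

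First I would record the key quantitative fact: because each $f_k$ in the defining sequence has $d_0$-norm at most $1$, for any two measures $\alpha,\beta$ and any convex weight we have $d_\infty(\tfrac12\alpha+\tfrac12\gamma, \tfrac12\beta+\tfrac12\gamma) = \tfrac12 d_\infty(\alpha,\beta)$, and more generally $d_\infty$ restricted to the segment between two measures scales linearly. Next, to get genuine separation at \emph{every} small scale $\varepsilon_0$ rather than just at one scale, I would instead use measures supported on a product-like structure: partition a small ball in $X$ (or use two far-apart points $a,b\in X$, available unless $X$ is a single point, in which case the statement is trivial) and set $\nu_{i,j}$ to be a measure that ``encodes'' $\mu_i$ near $a$ and $\mu_j$ near $b$, rescaled so that the two halves cannot be confused. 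Then two such measures $\nu_{i,j}$ and $\nu_{i',j'}$ with $(i,j)\neq(i',j')$ differ, in at least one of the two halves, by a push-forward-scaled copy of the original $\varepsilon$-separation; choosing $\delta$ small (depending on $\varepsilon$, $n$, and the distance $d(a,b)$, all fixed) forces $D_n^{\mathcal M}(\nu_{i,j},\nu_{i',j'}) \geq \varepsilon_0$ for all $\varepsilon_0 < \delta$ because the two halves never leak into each other under the first $n$ iterates.

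The main obstacle I anticipate is the uniformity in $\varepsilon_0$: the naive convex combination $\tfrac12\mu_i+\tfrac12\mu_j$ is only $(n,\varepsilon/2)$-separated, which gives $\delta$ shrinking with the iterate count in the wrong way and does not yield separation at \emph{all} scales below $\delta$. Overcoming this requires the ``spatial separation'' trick above so that the $\varepsilon$-gap in $\mathcal{M}(X)$ is transported without contraction — equivalently, one exhibits an isometric (or bi-Lipschitz with controlled constants) embedding $\mathcal{M}(X)\times\mathcal{M}(X)\hookrightarrow\mathcal{M}(X)$ that intertwines $T_*\times T_*$ with $T_*$, using that $T$ preserves the two disjoint regions around $a$ and $b$ on the relevant time scale; since we only need the first $n$ iterates, continuity of $T$ and compactness let us shrink the regions enough that no orbit of length $n$ escapes its region. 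Once that embedding is in place, the squaring of the separation number is immediate, and $\delta$ can be read off as a fixed function of $\varepsilon$ and $n$. I would close by noting $\delta < \varepsilon$ is arranged trivially by taking the minimum with $\varepsilon$.
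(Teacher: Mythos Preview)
Your proposal misses the actual mechanism and substitutes one that does not work. The paper's argument is much simpler than the spatial-separation machinery you sketch.

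The paper uses the \emph{asymmetric} convex combination $\lambda_{\mu_1,\mu_2} = (1-b)\mu_1 + b\mu_2$ for a small parameter $b \in (0,1)$, and sets $\delta = b\varepsilon$. With a small weight $b$ the two slots behave on different scales: if $\mu_1 \neq \mu_3$, the heavy slot dominates and the triangle inequality gives that the $d_\infty$-distance between $(1-b)\mu_1 + b\mu_2$ and $(1-b)\mu_3+b\mu_4$ (at the separating iterate) is at least $\varepsilon - 2b$; if $\mu_1 = \mu_3$ but $\mu_2 \neq \mu_4$, linearity of $d_\infty$ gives distance exactly $b\cdot d_\infty(T_*^j\mu_2, T_*^j\mu_4) \geq b\varepsilon$. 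Both bounds exceed any $\varepsilon_0 < b\varepsilon$ once $b$ satisfies $\varepsilon \geq 2b/(1-b)$, and that is the entire proof. Nothing about the geometry of $X$ is used; everything takes place in the affine structure of $\mathcal{M}(X)$ with the metric $d_\infty$.

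Your spatial-separation route, by contrast, asks for a map sending an arbitrary $\mu \in \mathcal{M}(X)$ to a measure supported near a prescribed point $a \in X$ while intertwining $T_*$. No such map exists in general: the support of $\mu$ may be all of $X$, and any localization destroys its orbit under $T_*$. The sentence ``shrink the regions enough that no orbit of length $n$ escapes'' concerns orbits of points, not measures, and does not yield the embedding $\mathcal{M}(X)\times\mathcal{M}(X)\hookrightarrow\mathcal{M}(X)$ you need. Your diagnosis of the obstacle is also off: a set that is $(n,c)$-separated is automatically $(n,\varepsilon_0)$-separated for every $\varepsilon_0 \leq c$, so ``uniformity in $\varepsilon_0$'' is free, and $n$ is fixed throughout the lemma so there is no ``iterate count'' to worry about. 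The real defect of the symmetric $\tfrac12$--$\tfrac12$ combination is that $\nu_{i,j}=\nu_{j,i}$, so you never get $m^2$ distinct measures, and differences in the two slots can cancel; the asymmetric weight is precisely what cures both issues.
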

\begin{proof}
    Consider $E \subset \mathcal{M}(X)$ a $(n, \varepsilon)$-separated set of maximal cardinality, that is, $\#E = \text{Sep}(T_*, n, \varepsilon)$. For $b \in (0, 1)$ sufficiently small, denote $\delta := b\varepsilon$ and fix some $\varepsilon_0 \in (0, \delta)$. Define the auxiliary set
    \begin{equation*}
        E_b = \{\lambda \in \mathcal{M}(X); \lambda = (1 - b)\mu_1 + b\mu_2, \text{ for } \mu_1, \mu_2 \in E\},
    \end{equation*}
    and observe that $\#E_b = (\#E)^2$. We will prove that $\text{Sep}(T_*, n, \varepsilon_0) \geq \#E_b$. Indeed, given distinct $\lambda, \beta \in E_b$, then $\lambda = (1 - b)\mu_1 + b\mu_2$ and $\beta = (1 - b)\mu_3 + b\mu_4$, where $\mu_i \in E$, $1 \leq i \leq 4$. Divide the rest of the proof into two steps.
    \begin{enumerate}
        \item[i)] If $\mu_1 \neq \mu_3$, then for any $0 \leq j \leq n - 1$ we have that
        \begin{equation*}
            d_\infty(T_*^j\lambda, T_*^j\beta) \geq d_\infty(T_*^j\mu_1, T_*^j\mu_3) - d_\infty(T_*^j\mu_1, T_*^j\lambda) - d_\infty(T_*^j\beta, T_*^j\mu_3).
        \end{equation*}
        Observe that $d_\infty(T_*^j\mu_1, T_*^j\lambda) = d_\infty(T_*^j\mu_1, (1 - b)T_*^j\mu_1 + bT_*^j\mu_2)$. Thus,
        \begin{equation*}
             d_\infty(T_*^j\mu_1, T_*^j\lambda) = \sum\limits_{k \in \mathbb{N}} \frac{\lvert \int f_kdT^j_*\mu_1 - (1 - b)\int f_kdT^j_*\mu_1 - b\int f_kdT^j_*\mu_2\rvert}{2^k} = b\;\cdot\; d_\infty(T^j_*\mu_1, T^j_*\mu_2).
        \end{equation*}
        Hence, $d_\infty(T_*^j\mu_1, T_*^j\lambda) \leq b$ for any $0 \leq j \leq n - 1$. The same computation can be made to verify that $d_\infty(T_*^j\beta, T_*^j\mu_3) \leq b$. Consider $0 \leq \ell \leq n - 1$ such that $d_\infty(T_*^\ell\mu_1, T_*^\ell\mu_3) \geq \varepsilon$, then $d_\infty(T_*^\ell\lambda, T_*^\ell\beta) \geq \varepsilon - 2b \geq \varepsilon_0$, for $b > 0$ sufficiently small. Precisely, $\varepsilon \geq \frac{2b}{1 - b}$.\\

        \item[ii)] If $\mu_1 = \mu_3$, then $\mu_2 \neq \mu_4$. It is a direct computation that, for all $0 \leq j \leq n - 1$,
        \begin{equation*}
             d_\infty(T^j_*\lambda, T^j_*\beta) = b\cdot d_\infty(T^j_*\mu_2, T^j_*\mu_4).
        \end{equation*}
        Therefore, for $0 \leq \ell \leq n - 1$ such that $d_\infty(T_*^\ell\mu_2, T_*^\ell\mu_4) \geq \varepsilon$, then $d_\infty(T^\ell_*\lambda, T^\ell_*\beta) \geq b\varepsilon \geq \varepsilon_0$.
    \end{enumerate}
\end{proof}

The proof of the above Lemma is the same as the proof of a Proposition in \cite{sigmund_1978}, where K. Sigmund proves that if the topological entropy of $T_*$ is positive, then $h(T_*) = \infty$. Our improvement is justified by the proof of Theorem \ref{thm:supremum-power-set-measure}; that is, we can compare the generalized entropy of $T_*$ with the supremum of the one-parameter family $[a(n)^t]$ for $t > 0$, when $o(T) \geq [a(n)]$.

\begin{proof}[Proof of Theorem \ref{thm:supremum-power-set-measure}]
    By Proposition \ref{prop:measure-induced-properties}, we know that $o(T_*) \geq o(T)$. Hence, for all $\varepsilon > 0$, there is $L_\varepsilon > 0$ such that for all $n \in \mathbb{N}$, $L_\varepsilon\cdot \text{Sep}(T_*, n, \varepsilon) \geq \text{Sep}(T, n, \varepsilon)$. By our hypothesis, there exists $C_\varepsilon > 0$ such that, for all $n \in \mathbb{N}$, $C_\varepsilon\cdot\text{Sep}(T, n, \varepsilon) \geq a(n)$. Therefore, there exists $M_\varepsilon > 0$ such that, for all $n \in \mathbb{N}$, $M_\varepsilon\cdot\text{Sep}(T_*, n, \varepsilon) \geq a(n)$. Given $t > 0$, consider $k \in \mathbb{N}$ such that $2k > t$, then, by Lemma \ref{lemma:sigmund-78-square-separate}, there exists $\delta > 0$ such that for all $\varepsilon_0 \in (0, \delta)$, 
    \begin{equation*}
        M_\varepsilon^{2k}\cdot\text{Sep}(T_*, n, \varepsilon_0) \geq a(n)^{2k} \geq a(n)^t.
    \end{equation*}
    This means that, for a given $t > 0$, there is $\delta > 0$ such that, for all $\varepsilon_0 \in (0, \delta)$, $[\text{Sep}(T_*, n, \varepsilon_0)] \geq [a(n)^t]$. Therefore, 
    \begin{equation*}
        o(T_*) = \sup\{[\text{Sep}(T_*, n, \varepsilon_0)]; \varepsilon_0 > 0\} \geq \sup \{[a(n)^t]; t \in (0, \infty)\} = \sup \mathbb{A}.
    \end{equation*}
    To conclude, if $o(T) = \ordzero$, then $o(T_*) = \ordzero$, again by Proposition \ref{prop:measure-induced-properties}.
\end{proof}

%%%%%%%%%%%%%%%%%%%%%%%%%%%%%%%%%%%%%%%%%%%%%%%%%%%%%%%%%%%%%%%%%%%%%%%%%%%%%%%%%%%%

\subsection{Proof of Theorem \ref{thm:morse-smale-measure-equality}}\label{subsec:proof-thm-morse-smale-measure-equality}

Given $F: S^1 \to S^1$ a Morse-Smale diffeomorphism, we only need to prove that $o(F_*) \leq \sup\mathbb{P}$. In fact, since the nonwandering set of $F$ is finite, then, by Corollary 1 in \cite{correa_pujals_2023}, $o(F) \geq [n]$. As a consequence of Theorem \ref{thm:supremum-power-set-measure}, we have $o(F_*) \geq \sup \mathbb{P}$.

Consider $\Omega(F) = \{p_1, ..., p_m\}$, for some $m \in \mathbb{N}$, and define the set of the convex sum of Dirac measures 
\begin{equation}\label{eq:probable-fixed-points-measure-morse-smale}
    \Gamma(F_*) = \left\{\sum_{i = 1}^m t_i\delta_{p_i}; t_i \geq 0 \text{ and } \sum_{i = 1}^m t_i = 1\right\}.
\end{equation}
Also, there is $k \in \mathbb{N}$ such that $F^k(p_i) = p_i$, for all $1 \leq i \leq m$. Thus, for $\mu \in \Gamma(F_*)$,
\begin{equation*}
(F_*)^k\mu = (F_*)^k\sum\limits_{i = 1}^m t_i\delta_{p_i} = \sum\limits_{i = 1}^m t_i\delta_{F^kp_i} = \sum\limits_{i = 1}^m t_i\delta_{p_i} = \mu.
\end{equation*}
Therefore, $\Gamma(F_*) \subset \text{Fix}((F_*)^k)$. Moreover, it is not difficult to see that $(F_*)^k$ equals $(F^k)_*$ as maps defined in $\mathcal{M}(S^1)$. Thus, we only need to prove that $o((F^k)_*) \leq \sup\mathbb{P}$, because, by Proposition B.1 in \cite{correa_pujals_2023}, $o(F_*) \leq o((F_*)^k)$. In other words, we can consider $F: S^1 \to S^1$ a Morse-Smale diffeomorphism whose all nonwandering points are fixed. Let us suppose that from now on, then $\Gamma(F_*) \subset \text{Fix}(F_*)$. Since $F$ is defined on $S^1$, there is a partition of $\Omega(F) = E \cup C$, where $p \in E$ is an expanding point, $q \in C$ is a contracting point, and $\#E = \#C$.\\

\begin{lemma}\label{lemma:morse-smale-gamma-equal-non-wandering}
    If $F: S^1 \to S^1$ is a Morse-Smale diffeomorphism where all its nonwandering points are fixed, then $\Gamma(F_*) = \Omega(F_*)$, where $\Gamma(F_*)$ is given by (\ref{eq:probable-fixed-points-measure-morse-smale}).
\end{lemma}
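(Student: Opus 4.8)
Here is how I would prove Lemma~\ref{lemma:morse-smale-gamma-equal-non-wandering}.

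\smallskip

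The inclusion $\Gamma(F_*)\subseteq\Omega(F_*)$ is immediate: every $\mu=\sum_{i=1}^m t_i\delta_{p_i}\in\Gamma(F_*)$ satisfies $F_*\mu=\sum_i t_i\delta_{Fp_i}=\sum_i t_i\delta_{p_i}=\mu$ since each $p_i$ is fixed, so $\Gamma(F_*)\subseteq\mathrm{Fix}(F_*)\subseteq\Omega(F_*)$. All the work is in the reverse inclusion, which I would establish in contrapositive form: \emph{every $\mu\in\mathcal{M}(S^1)$ with $\mu\bigl(S^1\setminus\Omega(F)\bigr)>0$ --- equivalently $\mu\notin\Gamma(F_*)$, since $\Gamma(F_*)=\mathcal{M}(\Omega(F))$ is exactly the set of probability measures carried by the finite set $\Omega(F)$ --- is a wandering point of $F_*$}.

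\smallskip

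The tool is a Lyapunov function on $S^1$ promoted to $\mathcal{M}(S^1)$. Using the Morse--Smale structure on the circle (finitely many fixed points, alternating expanding set $E$ and contracting set $C$, and on each complementary arc $F$ is an orientation-preserving self-homeomorphism of that arc with no interior fixed point, hence pushes every interior point strictly toward its contracting endpoint) I would build a continuous $L\colon S^1\to[0,1]$ with $L\equiv 1$ on $E$, $L\equiv 0$ on $C$, and $L$ strictly decreasing along the flow direction of each complementary arc; these constraints are mutually consistent and produce a continuous ``zigzag'' function. Then $g:=L\circ F-L$ is continuous, $g\le 0$ everywhere, and $\{g<0\}=S^1\setminus\Omega(F)$. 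Setting $\ell(\nu):=\int_{S^1}L\,d\nu$, the map $\ell\colon\mathcal{M}(S^1)\to\mathbb{R}$ is weak-$*$ continuous and $\ell(F_*\nu)-\ell(\nu)=\int g\,d\nu\le 0$ for \emph{every} $\nu\in\mathcal{M}(S^1)$, with strict inequality precisely when $\nu\notin\Gamma(F_*)$; moreover, since $F$ permutes the finite set $\Omega(F)$, one has $F_*^n\mu\in\Gamma(F_*)\iff\mu\in\Gamma(F_*)$, so along the orbit of any $\mu\notin\Gamma(F_*)$ the quantity $\ell$ strictly decreases at every step.

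\smallskip

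To conclude, I would fix $\mu\notin\Gamma(F_*)$, set $c_0=\ell(\mu)$ and $3\theta=c_0-\ell(F_*\mu)>0$; by weak-$*$ continuity of $\nu\mapsto\int g\,d\nu$ there is an open neighborhood $\mathcal{U}_1\ni\mu$ on which $\int g\,d\nu<-2\theta$, and I would take
\[
\mathcal{U}=\mathcal{U}_1\cap\{\nu\in\mathcal{M}(S^1):\ c_0-\theta<\ell(\nu)<c_0+\theta\},
\]
an open neighborhood of $\mu$. For $\nu\in\mathcal{U}$ a single step gives $\ell(F_*\nu)<\ell(\nu)-2\theta<c_0-\theta$, and then the \emph{global} monotonicity of $\ell$ along $F_*$-orbits forces $\ell(F_*^n\nu)\le\ell(F_*\nu)<c_0-\theta$ for all $n\ge 1$, so $F_*^n\nu\notin\mathcal{U}$; hence $F_*^n(\mathcal{U})\cap\mathcal{U}=\varnothing$ for every $n\ge 1$ and $\mu$ is wandering, so $\mu\notin\Omega(F_*)$. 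I expect the main obstacle to be exactly this last step: a bare super-level set $\{\ell>r\}$ of the Lyapunov function is forward-absorbing rather than wandering, so one must combine an \emph{upper} level constraint on $\ell$ with the neighborhood $\mathcal{U}_1$ on which one application of $F_*$ already produces a definite drop below $\inf_{\mathcal{U}}\ell$, a drop that global monotonicity prevents from ever being undone. A secondary point requiring care is the construction of $L$ itself, i.e.\ verifying that the circle Morse--Smale picture (conditions 3--4 above) genuinely yields a continuous \emph{strict} Lyapunov function.
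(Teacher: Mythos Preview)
Your proof is correct and takes a genuinely different route from the paper's. The paper argues the inclusion $\Omega(F_*)\subset\Gamma(F_*)$ by contradiction: assuming $\mu\in\Omega(F_*)$ with $\mathrm{supp}(\mu)\not\subset\Omega(F)$, it first invokes Sigmund's result that $\mu(\Omega(F))\ge 1/2$, then builds by hand a weak-$*$ open neighborhood $\mathcal{W}$ of $\mu$ (via the Portmanteau description of open sets, Remark~\ref{rmk:open-set-weak-star-topology}) using small intervals $U_i$ around the fixed points and an open $V\supset K$ off $\Omega(F)$. Since every point of $\overline{V}$ is wandering, large forward iterates send $\overline{V}$ into $\bigcup_{p_i\in C}U_i$; the paper then approximates an alleged recurrence point in $F_*^\ell(\mathcal{W})\cap\mathcal{W}$ by an atomic measure and obtains a contradiction through a two-case analysis (all $p_j$ with $j\in A$ contracting, versus some expanding $p_k$). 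Your Lyapunov-function argument is more conceptual: integrating a strict Lyapunov function $L$ for $F$ gives a weak-$*$ continuous functional $\ell$ that decreases along \emph{every} $F_*$-orbit and strictly so off $\Gamma(F_*)$, and the wandering neighborhood drops out immediately without Sigmund's lemma or any case split. This buys you a proof that would transfer essentially verbatim to any gradient-like system admitting a continuous strict Lyapunov function. The paper's approach, on the other hand, is more self-contained relative to the preliminaries already set up (no Lyapunov construction needed) and makes the role of the attractor/repeller dichotomy on $S^1$ explicit.
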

\begin{proof}
    Considering previous arguments, we only need to show that $\Omega(F_*) \subset \Gamma(F_*)$. Suppose that $\mu \in \Omega(F_*)$ and $\text{supp}(\mu)$ is not a subset of $\Omega(F)$. Recall that if $\mu \in \Omega(F_*)$, then $\mu(\Omega(F)) \geq 0.5$, by a Proposition in \cite{sigmund_1978}. Consider $A \subset \{1, ..., m\}$ such that $\mu(\{p_j\}) > 0$, for $j \in A$. For each $p_i \in \Omega(F)$, consider a small neighborhood $U_i$ of $p_i$, which we can consider as an open interval. Let $\alpha > 0$ such that $\alpha = \mu(\Omega(F))$ and consider sufficiently small $\varepsilon > 0$. Since $\mu$ is an inner regular measure and $\mu(\Omega(F)^\complement) = 1 - \alpha$, there is a compact and measurable set $K$ contained in $\text{supp}(\mu)$ such that $\mu(K) > 1 - \alpha - \varepsilon$ and $K \cap \Omega(F) = \varnothing$.

    Since $S^1$ is a normal space, there is an open set $V \subset S^1$ such that $K \subset V$ and $V \cap U_i = \varnothing$, for all $i \in \{1, ..., m\}$. Note that we can always shrink each $U_i$. Consider an open neighborhood of $\mu$ in $\mathcal{M}(S^1)$ given by
    \begin{equation*}
        \mathcal{W} = \{\nu \in \mathcal{M}(S^1); \nu(V) > 1 - \alpha - \varepsilon, \sum_{j \in A} \nu(U_j) > \alpha - \varepsilon \text{ and } \nu(U_j) > 2\varepsilon, \text{ for all } j \in A\}.
    \end{equation*}
    The above set is open in the weak-$*$ topology by the Remark \ref{rmk:open-set-weak-star-topology}. Since every point in $\overline{V}$ is a wandering point, there is $N \in \mathbb{N}$ such that for all $r \geq N$, $F^r(\overline{V}) \subset \bigcup_{p_i \in C} U_i$, and that for all $s \leq -N$, $F^s(\overline{V}) \subset \bigcup_{p_i \in E} U_i$. Also, given that $\mu$ is a nonwandering point for $F_*$, then there is $\ell > N$ such that $F_*^\ell(\mathcal{W}) \cap \mathcal{W} \neq \varnothing$.

    We can suppose, by a density argument, that $\lambda \in F_*^\ell(\mathcal{W}) \cap \mathcal{W}$ is of the form $\lambda = 1/L\sum_{i = 1}^L \delta_{x_i}$, where $x_i \in S^1$ need not be distinct. Thus, $F_*^\ell\lambda = 1/L\sum_{i = 1}^L \delta_{F^\ell x_i}$. Observe that, for all $x_i \in V$, $F^\ell x_i \in F^\ell(V) \subset \bigcup_{p_i \in C} U_i$. In the first case, suppose that all $p_j$ are contracting fixed points, for $j \in A$, then $F^\ell(U_j) \subset U_j$. Thus, 
    \begin{equation*}
        F^\ell_*\lambda\left(\bigcup_{p_i \in C} U_i\right) \geq \lambda(V) + \sum_{j \in A} \lambda(U_j) > 1 - 2\varepsilon.
    \end{equation*}
    This implies that $F^\ell_*\lambda(V) \leq 2\varepsilon$, which is a contradiction for $\varepsilon < (1 - \alpha)/3$. Then, $F_*^\ell\lambda \notin \mathcal{W}$. In the second case, if there is an expanding fixed point $p_k$, where $k \in A$, then
    \begin{equation*}
        F^\ell_*\lambda\left(\bigcap\limits_{p_i \in E} U_i^\complement\right) > 2(1 - \alpha - \varepsilon),
    \end{equation*}
    because $V \cap U_i = \varnothing$, for all $i \in \{1, ..., m\}$ and $F^\ell(V) \subset \bigcup_{p_i \in C} U_i$. Since $\alpha \geq 0.5$, we have that 
    \begin{equation*}
        F^\ell_*\lambda(U_k) \leq F^\ell_*\lambda\left(\bigcup\limits_{p_i \in E} U_i\right) \leq 1 - 2 + 2\alpha + 2\varepsilon \leq 2\varepsilon,
    \end{equation*}
    which is a contradiction. Therefore, $F_*^\ell\lambda \notin \mathcal{W}$. 
    
    This argument implies that $\text{supp}(\mu) \subset \Omega(F)$. In other words, there is a set $B \subset \{1, 2, ..., m\}$ such that $\text{supp}(\mu) = \{p_b \in \Omega(F); b \in B\}$. Therefore, there are positive real numbers $t_b$, $b \in B$, such that $\mu = \sum_{b \in B} t_bp_b$, where $\sum_{b \in B} t_b = 1$. Finally, $\mu \in \Gamma(F_*)$.
\end{proof}

For the next result, consider $(X, d)$ a compact metric space and $H: X \to X$ a homeomorphism. Define the following equivalence relation on $X$: $x \sim y$ if, and only if, $Hx = x$ and $Hy = y$. Then, consider the quotient space $X_0 = X/_{\sim}$, which is given by identifying the set of fixed points of $H$ in $X$. Another way is to denote $X_0 = X/_{\text{Fix}(H)}$, which is said to be an adjunction space. An element in $X_0$, that is a class, will be denoted by $\overline{x}$, where $x \in X$. Moreover, consider $\pi: X \to X_0$ the projection map $x \mapsto \overline{x}$. Note that if $x \notin \text{Fix}(H)$, then for all $y \neq x$, we have that $\overline{y} \neq \overline{x}$. For this specific case, we can endow a metric $d_0$ on $X_0$ given by
\begin{equation*}
    d_0(\overline{x}, \overline{y}) = \inf \{d(x, y); x \in \pi^{-1}(\overline{x}) \text{ and } y \in \pi^{-1}(\overline{y})\}.
\end{equation*}

When $\text{Fix}(H)$ is a closed set, then this relation induces a homeomorphism $H_0: X_0 \to X_0$ defined by $H_0\overline{x} = \overline{Hx}$. Observe that $H_0$ has a unique fixed point $\overline{p} \in X_0$. \\

\begin{lemma}\label{lemma:homeo-nonwandering-fix-points-order-bounded}
    If $H: X \to X$ is a homeomorphism of a compact metric space such that $\Omega(H) = \emph{Fix}(H)$ and $\emph{Fix}(H) \subsetneq X$, then $o(H) \leq \sup \mathbb{P}$.
\end{lemma}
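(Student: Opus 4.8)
The plan is to show directly that, for every $\varepsilon>0$, the non-decreasing sequence $n\mapsto \text{Span}(H,n,\varepsilon)$ grows at most polynomially, with a degree that may depend on $\varepsilon$; since $o(H)=\sup\{[\text{Span}(H,n,\varepsilon)]\in\mathbb{O}:\varepsilon>0\}$ and every polynomial class $[n^{k}]$ satisfies $[n^{k}]\le\sup\mathbb{P}$, the supremum of these classes is $\le\sup\mathbb{P}$, which is the assertion. The tool behind every step is a \emph{bounded excursion} estimate: for any open set $\mathcal{U}\supseteq \text{Fix}(H)=\Omega(H)$ there is $N=N(\mathcal{U})\in\mathbb{N}$ such that every $H$-orbit meets $X\setminus\mathcal{U}$ in at most $N$ iterates. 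Indeed, $X\setminus\mathcal{U}$ is compact and consists of wandering points, so each $z\in X\setminus\mathcal{U}$ has an open neighbourhood $W_z$ with $H^{i}(W_z)\cap W_z=\varnothing$ for all $i\ge1$; then the family $\{H^{i}(W_z)\}_{i\in\mathbb{Z}}$ is pairwise disjoint, so any orbit meets $W_z$ at most once, and a finite subcover $W_{z_1},\dots,W_{z_N}$ of $X\setminus\mathcal{U}$ gives the bound.

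I would first settle the model case $\text{Fix}(H)=\{p\}$. Fix $\varepsilon>0$, put $\mathcal{U}=B(p,\varepsilon/2)$, let $N=N(\mathcal{U})$, and fix a finite $(\varepsilon/2)$-net $Z\ni p$ of $X$. To each $x\in X$ associate the itinerary $\iota_n(x)=(z_0,\dots,z_{n-1})$, where $z_j=p$ whenever $H^{j}x\in\mathcal{U}$ and $z_j$ is a nearest net point of $H^{j}x$ otherwise; the second alternative occurs for at most $N$ indices $j$, so the number of realized itineraries is at most $\sum_{k=0}^{N}\binom{n}{k}(\#Z)^{k}=O(n^{N})$. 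Choosing one point per realized itinerary yields an $(n,\varepsilon)$-spanning set, because equality of itineraries forces $d(H^{j}x,H^{j}y)<\varepsilon$ for every $j<n$; hence $\text{Span}(H,n,\varepsilon)=O_\varepsilon(n^{N(\varepsilon)})$ and $o(H)\le\sup\mathbb{P}$ in this case.

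For the general case one passes to the adjunction space $X_0=X/\text{Fix}(H)$ with the induced homeomorphism $H_0$, for which $\Omega(H_0)=\{\overline p\}$, so the model case gives that $\text{Span}(H_0,n,\delta)$ is polynomial in $n$ for each $\delta>0$. The only complexity of $H$ not seen by $H_0$ is the \emph{position along $\text{Fix}(H)$ of those iterates that lie near $\text{Fix}(H)$}, which the quotient collapses to a point. Because $H$ restricts to the identity on $\text{Fix}(H)$, that position drifts slowly; and because $\Omega(H)$ \emph{equals} — rather than merely contains — $\text{Fix}(H)$, applying the bounded excursion estimate simultaneously to a nested family of neighbourhoods of $\text{Fix}(H)$ shows that no orbit can recur within a fixed distance of $\text{Fix}(H)$, so an orbit sweeps through these ``shells'' in order and its coarse trace on $\text{Fix}(H)$, read off a fixed finite $\varepsilon$-net of $\text{Fix}(H)$, realizes only polynomially many itineraries of length $n$. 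Multiplying the $O(n^{N})$ codes coming from the $H_0$-factor by the polynomially many trace codes, and choosing $\varepsilon'$ comparable to $\varepsilon$, gives $[\text{Span}(H,n,\varepsilon')]\le[n^{k(\varepsilon)}]$, whence $o(H)\le\sup\mathbb{P}$.

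I expect the main obstacle to be precisely this last count — showing that the coarse trace on $\text{Fix}(H)$ of a length-$n$ orbit takes only polynomially many values — because a crude ``bounded variation'' estimate for the drift only yields a subexponential bound, and one must genuinely exploit $\Omega(H)=\text{Fix}(H)$ (via the bounded excursion estimate at all scales) to rule out slow recurrence of orbits near $\text{Fix}(H)$ and recover the ordered ``shell'' structure. The remaining ingredients — the bounded excursion estimate, the single fixed point model case, and the bookkeeping that combines the $H_0$-codes with the trace codes — are routine.
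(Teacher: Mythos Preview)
Your Steps~1 and~2 are correct, and Step~2 is in fact a self-contained proof of the single--nonwandering--point bound that the paper imports from \cite{correa_dePaula_2023}. Step~3, however, diverges from the paper's route and carries exactly the gap you flag. The paper does \emph{not} attempt to count any ``trace on $\text{Fix}(H)$''. It equips $X_0=X/\text{Fix}(H)$ with the distance $d_0(\bar x,\bar y)=d(x,y)$ for wandering $x,y$ (and $d_0(\bar x,\bar p)=d(x,\text{Fix}(H))$), so that $\pi$ is literally distance-preserving on wandering points and on all of their iterates. Hence for any compact $K\subset X\setminus\Omega(H)$ one has $\text{Sep}(H,n,\varepsilon,K)=\text{Sep}(H_0,n,\varepsilon,\pi(K))$, and the single--fixed--point estimate applied to $H_0$ gives the polynomial bound directly; the points of a separated set lying in $\text{Fix}(H)$ contribute only a bounded amount because $H$ restricted to $\text{Fix}(H)$ is the identity. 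There is no ``information lost in the quotient'' to recover, precisely because the paper's $d_0$ never collapsed it.

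Your trace argument would be needed if one endowed $X_0$ with a genuine quotient metric --- under which wandering points near distinct pieces of $\text{Fix}(H)$ become close --- but then the polynomial count you want can fail. Take $X=S^1\times[0,1]$ and $H(\theta,y)=(\theta+a(y),g(y))$, where $g$ is a north--south homeomorphism of $[0,1]$ with a parabolic fixed point at $0$, and $a$ is continuous with $a(0)=a(1)=0$ and $a(y)\sim y$ near $0$. Then $\Omega(H)=\text{Fix}(H)=S^1\times\{0,1\}$, yet the $\theta$-coordinate of a forward orbit advances by $\sum_{k<n}a(g^{k}y)\sim\log n$ (and one can arrange $\sim\sqrt{n}$ by choosing $g$ less contracting). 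A monotone itinerary on a fixed $\varepsilon$-net of $S^1$ making that many full turns already realises super-polynomially many words of length $n$; so the sentence ``its coarse trace on $\text{Fix}(H)$\dots realizes only polynomially many itineraries of length $n$'' is not merely the hardest step to write down --- it is the step that fails. The paper's device of keeping $d_0=d$ on wandering points is exactly what lets one bypass this count.
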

\begin{proof}
    Consider a compact set $K \subset X$. If $K \cap \Omega(H) = \varnothing$, then $K_0 = \pi(K)$ is such that $K_0 = \{\overline{x}; x \in K\}$. Recall that if $x$ and $y$ are distinct wandering points, then $\overline{x} \neq \overline{y}$. Since $\Omega(H_0) = \{\overline{p}\}$, then $u(H_0, K_0) \leq \sup\mathbb{P}$, by a consequence of Theorem 2 in \cite{correa_dePaula_2023} and Remark \ref{rmk:generalized-entropy-non-compact}. Given $\varepsilon > 0$, we have that $\text{Sep}(H_0, n, \varepsilon, K_0) = \text{Sep}(H, n, \varepsilon, K)$, because $d_0(\overline{x}, \overline{y}) = d(x, y)$. Therefore,  $u(H, K) = u(H_0, K_0)$.

    If $K \cap \Omega(H) \neq \varnothing$, then, for a given $\varepsilon > 0$, there is $C \in \mathbb{N}$ such that 
    \begin{equation*}
        \text{Sep}(H, n, \varepsilon, K \cap \Omega(H)) \leq C,
    \end{equation*}
    for all $n \in \mathbb{N}$, because $H|_{\Omega(T)}$ is the identity and $K \cap \Omega(H)$ is also a compact set. This implies that $K\cap\Omega(H)$ does not contribute to the separation of orbits in $K$. Therefore, $u(H, K) = \sup\{u(H, F); F \subset K \cap \Omega(H)^\complement \text{ is compact}\}$. Since $F \cap \Omega(H) = \varnothing$, then $u(H, F) \leq \sup\mathbb{P}$, by the above argument. Thus, $u(H, K) \leq \sup\mathbb{P}$.

    Therefore, for a given compact set $K \subset X$, $u(H, K) \leq \sup\mathbb{P}$. This implies that $o(H) \leq \sup\mathbb{P}$, by Remark \ref{rmk:generalized-entropy-non-compact}.
\end{proof}

To conclude, by Lemma \ref{lemma:morse-smale-gamma-equal-non-wandering}, we have that $F_*: \mathcal{M}(S^1) \to \mathcal{M}(S^1)$ is a homeomorphism such that $\Omega(F_*) = \text{Fix}(F_*)$. Therefore, as a consequence of Lemma \ref{lemma:homeo-nonwandering-fix-points-order-bounded} and Theorem \ref{thm:supremum-power-set-measure}, $o(F_*) = \sup\mathbb{P}$.

%%%%%%%%%%%%%%%%%%%%%%%%%%%%%%%%%%%%%%%%%%%%%%%%%%%%%%%%%%%%%%%%%%%%%%%%%%%%%%%%%%%%

\subsection{Proof of Theorem \ref{thm:homeo-inequality-hyperspace-measure}}\label{subsec:proof-thm-homeo-inequality-hyperspace-measure}

The main idea of this proof is to construct a map, which we will call $\Psi_L$, defined on the set of finite convex combinations of atomic measures on $\mathcal{M}(X)$ to some subset of $\hyp{K}(X\times[0, 1])$. Although this map is not continuous, controlling how it separates points is possible, as shown in Lemma \ref{lemma:distance-image-measure-to-hyperspace}. Also, we prove that the order of growth of the induced hyperspace dynamics on $\hyp{K}(X\times[0, 1])$ is the same as in $\hyp{K}(X)$, when the dynamics of the interval $[0, 1]$ is the identity, as stated in Lemma \ref{lemma:order-hyperspace-product-identity}. 

In this setting, $\mathcal{M}(X)$ is endowed with the Prohorov metric defined as
\begin{equation*}
    \rho(\mu, \nu) = \inf \{\delta > 0; \mu(A) \leq \nu(A_\delta) + \delta, \forall A \in \mathcal{B}_X\},
\end{equation*}
for given $\mu, \nu \in \mathcal{M}(X)$, that also generates the weak-$*$ topology. This metric appears to be the most suitable for comparing distances in the hyperspace, as it considers the information of a measure within the $\delta$-neighborhood of a set.

For $N \in \mathbb{N}$, consider $\mathcal{M}_N(X) = \{\mu \in \mathcal{M}(X); \mu = \frac{1}{N}\sum\limits_{i = 1}^N \mu_{x_i}\}$, where $\mu_{x_i}$ is the atomic measure on $x_i \in X$ and are not necessarily distinct. It is well known that $\mathcal{M}_\infty(X) := \bigcup_{N \geq 1} \mathcal{M}_N(X)$ is a dense set in $\mathcal{M}(X)$ \cite[Lemma 1]{bauer_sigmund_1975}, and that each $\mathcal{M}_N(X)$ is closed and invariant. For a given $L \in \mathbb{N}$, define $\mathcal{G}_L = \bigcup_{N = 1}^L \mathcal{M}_N(X)$, that is invariant by $T_*$. For $\mu \in \mathcal{G}_L$,
\begin{equation*}
    \mu = \frac{1}{N}\sum_{i = 1}^N \mu_{x_i} = \frac{\chi(x_1)}{N}\mu_{x_1} + ... + \frac{\chi(x_{\ell})}{N}\mu_{x_{\ell}},
\end{equation*}
where $x_i \neq x_j$, for $1 \leq i < j \leq \ell \leq N \leq L$, and $\chi(x_i)$ is the multiplicity of the measure $\mu_{x_i}$. Consider $\Psi_L: \mathcal{G}_L \to \hyp{K}(X\times[0, 1])$ defined by
\begin{equation*}\label{eq:conjugation-measure-infty}
    \Psi_L(\mu) = \bigcup_{i = 1}^\ell \left\{\left(x_i, \frac{\chi(x_i)}{N}\right)\right\}.
\end{equation*}
Given $T: X \to X$ an injective continuous map, we have that 
\begin{equation*}
    \Psi_L(T_*\mu) = \left(\frac{\chi(x_1)}{N}\mu_{Tx_1} + ... + \frac{\chi(x_{\ell})}{N}\mu_{Tx_{\ell}}\right) = \bigcup_{i = 1}^\ell \left\{\left(Tx_i, \frac{\chi(x_i)}{N}\right)\right\} = (T\times \text{Id})_\hyp{K}\circ\Psi_L(\mu),
\end{equation*}
where $\text{Id}$ is the identity map on the interval $[0, 1]$, which we endow with the Euclidean distance $\lvert\;.\;\rvert$. In other words, $\Psi_L(\mathcal{G}_L)$ is invariant by $(T\times \text{Id})_\hyp{K}$, and $\Psi_L$ commutes the below diagram.
\begin{center}
\begin{tikzpicture}[node distance=2cm, auto]
  % Nodes
  \node (X) {$\mathcal{G}_L$};
  \node (Y) [right of=X, node distance=4cm] {$\mathcal{G}_L$};
  \node (X2) [below of=X] {$\Psi_L(\mathcal{G}_L) \subset \hyp{K}(X\times[0, 1])$};
  \node (Y2) [below of=Y] {$\Psi_L(\mathcal{G}_L)$};

  % Arrows
  \draw[->] (X) to node {$\Psi_L$} (X2);
  \draw[->] (Y) to node {$\Psi_L$} (Y2);
  \draw[->] (X) to node {$T_*$} (Y);
  \draw[->] (X2) to node [below] {$(T\times \text{Id})_\hyp{K}$} (Y2);
\end{tikzpicture}
\end{center}
Unfortunately, $\Psi_L$ is not a continuous conjugation between $T_*|_{\mathcal{G}_L}$ and $(T\times \text{Id})_\hyp{K}|_{\Psi_L(\mathcal{G}_L)}$. For such $\mu \in \mathcal{G}_L$, observe that $\text{supp}(\mu) = \{x_1, ..., x_\ell\}$.\\

\begin{lemma}\label{lemma:hausdorff-distance-equal-support}
    For sufficiently small $\varepsilon > 0$, if $\mu, \lambda \in \mathcal{G}_L(X)$ are such that $d_H(\Psi_L(\mu), \Psi_L(\lambda)) < \varepsilon$, then $\#\emph{supp}(\mu) = \#\emph{supp}(\lambda)$.
\end{lemma}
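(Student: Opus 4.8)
The plan is to exploit the strong arithmetic constraint on the atom masses of measures in $\mathcal{G}_L$: if $\mu=\frac1N\sum_{i=1}^{\ell}\chi(x_i)\mu_{x_i}\in\mathcal{G}_L$ with $x_1,\dots,x_\ell$ distinct and $N\le L$, then every mass $\mu(\{x_i\})=\chi(x_i)/N$ belongs to the finite set $Q_L=\{p/q : 1\le p\le q\le L\}\subset(0,1]$. Put $\gamma_L=\min\{|s-s'| : s,s'\in Q_L,\ s\ne s'\}>0$ and fix once and for all $\varepsilon<\gamma_L/2$; this threshold depends only on $L$, as the statement allows. First I would rewrite $d_H(\Psi_L(\mu),\Psi_L(\lambda))<\varepsilon$ as the two inclusions $\Psi_L(\mu)\subset(\Psi_L(\lambda))_\varepsilon$ and $\Psi_L(\lambda)\subset(\Psi_L(\mu))_\varepsilon$, which yield a map $\sigma$ assigning to each atom $x_i$ of $\mu$ an atom $y_{\sigma(i)}$ of $\lambda$ with $d(x_i,y_{\sigma(i)})<\varepsilon$ and $|\mu(\{x_i\})-\lambda(\{y_{\sigma(i)}\})|<\varepsilon$, and, symmetrically, a map $\tau$ from the atoms of $\lambda$ to those of $\mu$.

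The second step is to note that the paired mass coordinates must coincide exactly: since $\mu(\{x_i\})$ and $\lambda(\{y_{\sigma(i)}\})$ both lie in $Q_L$ and differ by less than $\varepsilon<\gamma_L$, they are equal. Hence every value attained by the atom masses of $\mu$ is also attained by those of $\lambda$, and conversely, so $\mu$ and $\lambda$ share a common set of atom masses $\{u_1,\dots,u_r\}$. Writing $A_k$ (resp.\ $B_k$) for the set of atoms of $\mu$ (resp.\ of $\lambda$) of mass $u_k$, the maps $\sigma$ and $\tau$ restrict to maps $A_k\to B_k$ and $B_k\to A_k$ that move every point a distance less than $\varepsilon$ in $(X,d)$; in particular $A_k$ and $B_k$ lie within Hausdorff distance $\varepsilon$ of each other in $X$, for each $k$.

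It then remains to upgrade ``$A_k$ and $B_k$ are $\varepsilon$-close'' to $\#A_k=\#B_k$, for then summation gives $\#\text{supp}(\mu)=\sum_k\#A_k=\sum_k\#B_k=\#\text{supp}(\lambda)$. The two ingredients I would bring in are (a) the composition $\tau\circ\sigma$, which sends each $x\in A_k$ to an atom of $A_k$ within $2\varepsilon$ of $x$ (and likewise $\sigma\circ\tau$ on $B_k$), so that if these compositions are forced to be the identity then $\sigma|_{A_k}$ and $\tau|_{B_k}$ are mutually inverse bijections; and (b) the normalization $\sum_k u_k\,\#A_k=1=\sum_k u_k\,\#B_k$, which sharply limits the admissible multiplicity profiles $(\#A_k)_k$ and $(\#B_k)_k$. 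I expect precisely this step to be the main obstacle: the displacement bound $2\varepsilon$ need not be smaller than the spacing between distinct atoms of the same mass of $\mu$ or of $\lambda$, and that spacing is not controlled uniformly in $\mu,\lambda$, so one must genuinely rule out a ``mass-preserving collapse'' of several atoms of one measure onto a single atom of the other. I would address this either by first reducing to the case in which the atoms of $\mu$ and of $\lambda$ are $2\varepsilon$-separated within each mass class --- making $\tau\circ\sigma$ and $\sigma\circ\tau$ honest identities --- or by feeding the Hausdorff-proximity of $A_k$ and $B_k$ back into the normalization identity to exclude unequal profiles directly.
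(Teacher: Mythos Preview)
You have put your finger on the real difficulty: after the gap $\gamma_L$ forces paired masses to coincide, one still has to prove $\#A_k=\#B_k$ for every mass level, and neither of your two proposed remedies closes this. Reducing to atoms that are $2\varepsilon$-separated within each mass class is not available, since $\mu,\lambda$ are arbitrary in $\mathcal{G}_L$ and the threshold on $\varepsilon$ must depend only on $L$; and the normalization $\sum_k u_k\,\#A_k=\sum_k u_k\,\#B_k=1$, even combined with $d_H(A_k,B_k)<\varepsilon$, does not pin down the profile $(\#A_k)_k$.

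In fact the lemma, read with $\varepsilon$ uniform over $\mathcal{G}_L$ (as its use in the next lemma requires), is false, so the gap cannot be closed. With $L=6$, $X=[0,10]$ and small $\alpha>0$, set
\[
\mu=\tfrac13\delta_0+\tfrac13\delta_\alpha+\tfrac16\delta_5+\tfrac16\delta_{5+\alpha},\qquad
\lambda=\tfrac13\delta_0+\tfrac16\delta_5+\tfrac16\delta_{5+\alpha}+\tfrac16\delta_{5+2\alpha}+\tfrac16\delta_{5+3\alpha}.
\]
Both lie in $\mathcal{M}_6\subset\mathcal{G}_6$, one has four atoms and the other five, yet $d_H(\Psi_6(\mu),\Psi_6(\lambda))\le 2\alpha$, which falls below any preassigned $\varepsilon$. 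This is precisely the mass-preserving collapse you anticipated: two mass-$\tfrac13$ atoms of $\mu$ merge onto one of $\lambda$, compensated by two extra mass-$\tfrac16$ atoms of $\lambda$. The paper's own argument stumbles at the same point; the equality ``$\sum_{i=1}^k\chi(x_i)/N=1$'' it invokes tacitly treats the $k$ matched atoms of the larger-support measure as distinct and exhausting that support, which the example above violates.
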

\begin{proof}
    Suppose that $\#\text{supp}(\mu) > \#\text{supp}(\lambda) = k$. Since $d_H(\Psi_L(\mu), \Psi_L(\lambda)) < \varepsilon$, then for every $y \in \text{supp}(\lambda)$, there is $x \in \text{supp}(\mu)$ such that $d(y, x) < \varepsilon$ and
    \begin{equation*}
        \left\lvert\frac{\chi(x)}{N} - \frac{\chi(y)}{P}\right\rvert < \varepsilon,
    \end{equation*}
    for $1 \leq N \leq P \leq L$. This implies that
    \begin{equation*}
        \sum_{i = 1}^k \frac{\chi(y_i)}{P} + k\cdot\varepsilon > 1,
    \end{equation*}
    because $\sum_{i = 1}^k \chi(x_i)/N = 1$. But the above inequality is false for
    \begin{equation*}
        \varepsilon < \frac{1}{L^2} \leq \frac{1}{k}\cdot\frac{\chi(y_{k + 1})}{P} \leq \frac{1}{k}\sum_{i = k + 1}^\ell \frac{\chi(y_i)}{P}.
    \end{equation*}
\end{proof}

In other words, the above Lemma states that $\Psi_L$ is locally constant with respect to the cardinality of the support of a measure. Therefore, the above result is helpful to know how $\Psi_L$ distorts the distance between the image of such measures.\\

\begin{lemma}\label{lemma:distance-image-measure-to-hyperspace}
    If $\rho(\mu, \lambda) \geq \varepsilon$, then $d_H(\Psi_L(\mu), \Psi_L(\lambda)) \geq \varepsilon/L$.
\end{lemma}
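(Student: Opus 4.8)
The plan is to prove the contrapositive: assuming $\eta := d_H(\Psi_L(\mu),\Psi_L(\lambda)) < \varepsilon/L$, we show $\rho(\mu,\lambda) \le L\eta < \varepsilon$. It is harmless to assume $\eta$ is small; in particular $\eta < 1/L^{2}$, so that Lemma~\ref{lemma:hausdorff-distance-equal-support} is available (the reduction of the complementary range $\eta \ge 1/L^{2}$, which forces $\varepsilon > 1/L$, is routine, and in any case $\rho(\mu,\lambda) \ge \varepsilon$ already bounds $\varepsilon$ by the diameter of $X$). By Lemma~\ref{lemma:hausdorff-distance-equal-support} we then have $\#\text{supp}(\mu) = \#\text{supp}(\lambda) =: k \le L$, and we write $\mu = \sum_{i=1}^{k} m_i\delta_{x_i}$ and $\lambda = \sum_{j=1}^{k} n_j\delta_{y_j}$ with the $x_i$ (resp.\ $y_j$) distinct, $\sum_i m_i = \sum_j n_j = 1$, and $m_i, n_j \ge 1/L$.

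By construction $\Psi_L(\mu) = \{(x_i,m_i)\}_{i\le k}$ and $\Psi_L(\lambda) = \{(y_j,n_j)\}_{j\le k}$ are at Hausdorff distance $\eta$ in $X\times[0,1]$, so for each $i$ there is an index $\phi(i)$ with $d(x_i,y_{\phi(i)}) \le \eta$ and $\abs{m_i - n_{\phi(i)}} \le \eta$, and symmetrically a map $\psi$ sending each $y_j$ to a nearby $x_{\psi(j)}$ with matching mass. The heart of the argument is a direct verification of the Prohorov inequality. Fix a Borel set $A \subseteq X$: for every $i$ with $x_i \in A$ one has $y_{\phi(i)} \in A_\eta$ and $n_{\phi(i)} \ge m_i - \eta$, so \emph{provided $\phi$ is injective on $\{i : x_i \in A\}$} we get
\[
\lambda(A_\eta) \;\ge\; \sum_{i:\,x_i\in A} n_{\phi(i)} \;\ge\; \sum_{i:\,x_i\in A}(m_i - \eta) \;\ge\; \mu(A) - k\eta \;\ge\; \mu(A) - L\eta,
\]
hence $\mu(A) \le \lambda(A_{L\eta}) + L\eta$. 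Since $A$ is arbitrary, $\rho(\mu,\lambda) \le L\eta$, which is the desired conclusion.

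The step I expect to be the main obstacle is precisely this injectivity of $\phi$ on $\{i : x_i \in A\}$ — equivalently, producing a bijective ``$\eta$-matching'' between the weighted atoms $(x_i,m_i)$ and $(y_j,n_j)$. Encoding the situation as a bipartite graph joining $x_i$ to $y_j$ whenever $d(x_i,y_j)\le\eta$ and $\abs{m_i-n_j}\le\eta$, the Hausdorff bound guarantees every vertex has positive degree, and by Hall's theorem it suffices to check Hall's condition. The difficulty is that several atoms of $\mu$ could simultaneously be close, in both coordinates, to a single atom of $\lambda$; the strategy to handle this is to observe that such atoms then form a cluster of diameter $\le 2\eta$ whose masses all lie within $\eta$ of that atom's mass, and to use inner regularity of $\mu$ together with the reverse closeness $\psi$ to bound the total mass of the cluster by $\lambda$ of an $O(\eta)$-neighborhood, redistributing the matching so that never more than $O(\eta)$ mass per atom is left unmatched. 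Keeping this bookkeeping under control — so that, after summing over the at most $L$ atoms, the total unmatched mass stays $\le L\eta$ and the estimate above survives — is the delicate point of the proof.
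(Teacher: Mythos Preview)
Your skeleton—contrapositive, equal support size from the previous lemma, then verify the Prohorov condition via an atom-by-atom matching $\phi$—is exactly the paper's route. The injectivity you flag as ``the main obstacle'' is handled in the paper by a one-line assertion: for every $x\in F$ there is a \emph{unique} $y\in F_\varepsilon\cap\text{supp}(\lambda)$ with $d(x,y)<\varepsilon$, after which the bound $\mu(F)-\lambda(F_\varepsilon)<t\varepsilon$ (with $t=\#F\le L$) is read off directly. So the paper does not carry out anything like your Hall-matching program; it simply takes the injective correspondence for granted.

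Your caution is warranted, and it is not merely a bookkeeping nuisance: as stated, the claim fails, so no matching argument can rescue it. With $L=10$, all masses $1/L$, $\text{supp}(\mu)=\{0,\eta,2\eta,\dots,8\eta,\,5\}$ and $\text{supp}(\lambda)=\{0,\,5,5{+}\eta,\dots,5{+}8\eta\}$, every point of $\Psi_L(\mu)$ lies within $8\eta$ of a point of $\Psi_L(\lambda)$ and conversely (all second coordinates equal $1/10$), so $d_H(\Psi_L(\mu),\Psi_L(\lambda))=8\eta$; yet $\mu$ places mass $0.9$ on $[0,8\eta]$ while $\lambda$ places only $0.1$ there, forcing $\rho(\mu,\lambda)\ge 0.8$. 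Taking $\eta$ small enough violates $d_H\ge\varepsilon/L$ for $\varepsilon=0.8$ (indeed for every $\varepsilon\le 0.8$). In this same example Hall's condition fails outright—nine atoms of $\mu$ share the single $\eta$-neighbour $0$ in $\text{supp}(\lambda)$—and the paper's displayed inequality $\sum\lvert m_x-n_{y(x)}\rvert>\mu(F)-\lambda(F_\varepsilon)$ breaks for $F=\{0,\eta,\dots,8\eta\}$. The underlying defect is that $\Psi_L$ records only the \emph{set} of (position, mass) pairs, so when many distinct equal-mass atoms cluster, the Hausdorff metric on $\Psi_L$ cannot see their multiplicity; a correct comparison between $T_*$ and $T_{\mathcal K}$ needs a different encoding.
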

\begin{proof}
We will prove that if $d_H(\Psi_L(\mu), \Psi_L(\lambda)) < \varepsilon$, then $\rho(\mu, \lambda) < L\varepsilon$. Given $\varepsilon > 0$ sufficiently small, Lemma \ref{lemma:hausdorff-distance-equal-support} implies that $\#\text{supp}(\mu) = \#\text{supp}(\lambda)$. Thus, for any Borelian set $B \subset X$ such that $B \cap \text{supp}(\mu) \neq \varnothing$, there is nonempty $F \subset \text{supp}(\mu)$, $\#F = t$, such that $F = B \cap \text{supp}(\mu)$, and for all $x \in F$, there is a unique $y \in F_{\varepsilon} \cap \text{supp}(\lambda)$ such that $d(x, y) < \varepsilon$. This implies that 
    \begin{equation*}
        t\varepsilon > \sum_{x \in F, y \in F_{\varepsilon}} \left\lvert\frac{\chi(x)}{N} - \frac{\chi(y)}{P}\right\rvert > \mu(F) - \lambda(F_{\varepsilon}).
    \end{equation*}
    Note that $t \geq 1$, then $\lambda(F_{t\varepsilon}) \geq \lambda(F_{\varepsilon})$. Thus, $t\varepsilon + \lambda(F_{t\varepsilon}) \geq \mu(F)$ for all nonempty $F \subset \text{supp}(\mu)$. Since $t \leq L$, then $\rho(\mu, \lambda) < L\varepsilon$.
\end{proof}

A consequence of the above result is that if two measures are $\varepsilon$-apart by some iterate of $T_*$, that is, if $\rho(T_*^i\mu, T_*^i\lambda) \geq \varepsilon$, then the image by $\Psi_L$ of these measure are at least $\varepsilon/L$-apart by the same iterate of $(T\times \text{Id})_\hyp{K}$. In this case,
\begin{equation}\label{eq:hausdorff-distance-dynamics-times-identity}
    d_H((T\times \text{Id})^i_\hyp{K}\circ\Psi_L(\mu), (T\times \text{Id})^i_\hyp{K}\circ\Psi_L(\lambda)) \geq \frac{\varepsilon}{L}.
\end{equation}
In the next lemma, we show that there is no difference between the order of growth of $(T\times \text{Id})_\hyp{K}$, where Id is the identity, and the order of growth of $T_\hyp{K}$. Hence, $\Psi_L$ can be used as a tool to compare the growth of separated sets between $T_*$ and $T_\hyp{K}$.\\

\begin{lemma}\label{lemma:order-hyperspace-product-identity}
    Given $T: X \to X$ a continuous map and $\emph{Id}:[0, 1] \to [0, 1]$ the identity map, then $o(T_\hyp{K}) = o((T\times \emph{Id})_\hyp{K})$.
\end{lemma}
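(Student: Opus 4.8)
The plan is to establish the two inequalities $o(T_\hyp{K})\le o((T\times\text{Id})_\hyp{K})$ and $o((T\times\text{Id})_\hyp{K})\le o(T_\hyp{K})$ separately; the first is soft and the second carries all the weight.

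For $o(T_\hyp{K})\le o((T\times\text{Id})_\hyp{K})$, fix $c\in[0,1]$ and consider $\iota_c\colon\hyp{K}(X)\to\hyp{K}(X\times[0,1])$, $\iota_c(A)=A\times\{c\}$. Since $d_H(\iota_c(A),\iota_c(B))=d_H(A,B)$, this is an isometric embedding; its image $\{A\times\{c\}\colon A\in\hyp{K}(X)\}$ is closed in $\hyp{K}(X\times[0,1])$, because a Hausdorff limit of closed sets with constant second coordinate $c$ again lies in $X\times\{c\}$; and $(T\times\text{Id})(A\times\{c\})=T(A)\times\{c\}$ shows this image is invariant and that $\iota_c$ conjugates $T_\hyp{K}$ with the restriction of $(T\times\text{Id})_\hyp{K}$ to this compact invariant subsystem. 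By Theorem 1 of \cite{correa_pujals_2023} (invariance of the generalized entropy under conjugacy together with its monotonicity under passage to compact invariant subsystems, exactly as applied in Proposition \ref{prop:measure-induced-properties}), $o(T_\hyp{K})=o((T\times\text{Id})_\hyp{K}|_{\iota_c(\hyp{K}(X))})\le o((T\times\text{Id})_\hyp{K})$.

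For the reverse inequality I would use Corollary \ref{coro:order-growth-hyperspace-character}, which gives $o(T_\hyp{K})=\sup\{[2^{\text{Span}(T,n,\varepsilon)}]\colon\varepsilon>0\}$ and, applied to the continuous map $T\times\text{Id}$ on the compact space $X\times[0,1]$, also $o((T\times\text{Id})_\hyp{K})=\sup\{[2^{\text{Span}(T\times\text{Id},n,\varepsilon)}]\colon\varepsilon>0\}$. Endowing $X\times[0,1]$ with the maximum metric (legitimate by Remark \ref{rmk:gen-entropy-not-depend-metric}), the $(T\times\text{Id})$-dynamical distance is $\max\{d_n,|\cdot|\}$, so the product of an optimal $(n,\varepsilon)$-spanning set of $X$ with any finite $(n,\varepsilon)$-spanning set of $[0,1]$ is $(n,\varepsilon)$-spanning for $X\times[0,1]$; since the identity moves no point, the factor coming from $[0,1]$ has cardinality $C_\varepsilon:=\lceil 1/\varepsilon\rceil+1$ \emph{independently of $n$}, and therefore
\begin{equation*}
\text{Span}(T\times\text{Id},n,\varepsilon)\ \le\ C_\varepsilon\cdot\text{Span}(T,n,\varepsilon)\qquad(n\in\mathbb{N}).
\end{equation*}
Hence $[2^{\text{Span}(T\times\text{Id},n,\varepsilon)}]\le[(2^{\text{Span}(T,n,\varepsilon)})^{C_\varepsilon}]$ for each $\varepsilon$, and the claim reduces to showing that the supremum over $\varepsilon>0$ of these $C_\varepsilon$-th powers is still $o(T_\hyp{K})$.

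That last reduction is the main obstacle. In $\ordgrow$ the $C_\varepsilon$-th power of an unbounded order of growth is strictly larger than the order itself (for instance $[2^{Cn}]>[2^n]$ once $C\ge 2$), so the inflation by $C_\varepsilon$ cannot be absorbed one scale at a time; it must be absorbed by the supremum. The route I would pursue is to prove a scale-uniform sub-multiplicativity of the spanning numbers: for every $\varepsilon>0$ and $m\in\mathbb{N}$ there exists $\varepsilon'\in(0,\varepsilon)$, depending only on $\varepsilon$ and $m$, with $\text{Span}(T,n,\varepsilon')\ge m\cdot\text{Span}(T,n,\varepsilon)$ for all $n$; granting this with $m=C_\varepsilon$ yields $[(2^{\text{Span}(T,n,\varepsilon)})^{C_\varepsilon}]\le[2^{\text{Span}(T,n,\varepsilon')}]\le o(T_\hyp{K})$, and taking the supremum over $\varepsilon$ finishes the proof. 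Proving such an estimate uniformly in $n$—which has to exploit that the metric on $X$ gets refined by iterating $T$ whereas the scales along $[0,1]$ are fixed—is the delicate point and the step I expect to demand the most care.
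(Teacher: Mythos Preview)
Your argument mirrors the paper's almost exactly: the embedding $A\mapsto A\times\{0\}$ handles $o(T_\hyp{K})\le o((T\times\text{Id})_\hyp{K})$ in both, and for the reverse inequality both you and the paper start from Corollary~\ref{coro:order-growth-hyperspace-character} together with $\text{Span}(T\times\text{Id},n,\varepsilon)\le C_\varepsilon\cdot\text{Span}(T,n,\varepsilon)$. The paper then simply writes
\[
\sup_{\varepsilon>0}\bigl[2^{\text{Span}(T\times\text{Id},n,\varepsilon)}\bigr]\ \le\ \sup_{\varepsilon>0}\bigl[2^{\text{Span}(T,n,\varepsilon)}\bigr]
\]
as if the factor $C_\varepsilon$ inside the exponent were absorbed by the supremum; you are right to isolate this as the crux and to note that $[2^{C a(n)}]$ is strictly larger than $[2^{a(n)}]$ once $a$ is unbounded and $C\ge 2$.

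The gap, however, cannot be closed by the sub-multiplicativity you propose---and in fact cannot be closed at all, because the inequality itself fails. Your uniform estimate $\text{Span}(T,n,\varepsilon')\ge m\cdot\text{Span}(T,n,\varepsilon)$ already breaks down when $X$ is finite. More decisively, take the subshift $(S,\sigma)$ of the paper's own Example, where $\text{Span}(\sigma,n,2^{-k})=n+2k+1$ and hence $o(\sigma_\hyp{K})=[2^n]$. With the max metric on $S\times[0,1]$ one has
\[
\text{Sep}(\sigma\times\text{Id},\,n,\,2^{-k})\ \ge\ \text{Sep}(\sigma,n,2^{-k})\cdot\text{Sep}(\text{Id},n,2^{-k})\ \ge\ (n+2k+1)(2^{k}+1),
\]
so Corollary~\ref{coro:order-growth-hyperspace-character} applied to $\sigma\times\text{Id}$ yields $o((\sigma\times\text{Id})_\hyp{K})\ge\bigl[2^{(2^{k}+1)n}\bigr]$ for every $k$, whence $o((\sigma\times\text{Id})_\hyp{K})\ge\sup\mathbb{E}$. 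In particular $h((\sigma\times\text{Id})_\hyp{K})=\infty$ while $h(\sigma_\hyp{K})=\log 2$, so $o((T\times\text{Id})_\hyp{K})\le o(T_\hyp{K})$ is false in general. Your instinct that the step ``demands the most care'' is exactly right; it is the paper's argument, not yours, that glosses over it.
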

\begin{proof}
    First we prove that $o(T_\hyp{K}) \geq o((T\times \text{Id})_\hyp{K})$. Given $\varepsilon > 0$ and $n \in \mathbb{N}$, it is well known that 
    \begin{equation*}
        \text{Span}(T\times\text{Id}, n, \varepsilon) \leq \text{Span}(T, n, \varepsilon)\cdot\text{Span}(\text{Id}, n, \varepsilon). 
    \end{equation*}
    Furthermore, $\text{Span}(\text{Id}, n, \varepsilon) \leq \lceil 1/\varepsilon \rceil$. Therefore, applying Equation (\ref{eq:order-growth-hyperspace-character}), 
    \begin{equation*}
        o((T\times \text{Id})_\hyp{K}) = \sup \{[2^{\text{Span}(T\times\text{Id}, n, \varepsilon)}] \in \mathbb{O}; \varepsilon > 0\} \leq \sup \{[2^{\text{Span}(T, n, \varepsilon)}] \in \mathbb{O}; \varepsilon > 0\} = o(T_\hyp{K}).
    \end{equation*}
    To prove that $o(T_\hyp{K}) \leq o((T\times \text{Id})_\hyp{K})$, observe that $A \mapsto A\times\{0\}$, for a given $A \in \hyp{K}(X)$, is a continuous injective map that conjugates $T_\hyp{K}$ to $(T\times \text{Id})_\hyp{K}$.
\end{proof}

Finally, we can use all the above statements to prove that $o(\induce{T}) \geq o(T_*)$. 

\begin{proof}[Proof of Theorem \ref{thm:homeo-inequality-hyperspace-measure}]
    For any $\varepsilon > 0$ and $n \in \mathbb{N}$, consider $E \subset \mathcal{M}(X)$ a $(n, \varepsilon)$-separated set of maximal cardinality, that is, $\#E = \text{Sep}(T_*, n, \varepsilon)$. For each $\mu \in E$, choose $\mu_0 \in B_{(n, \varepsilon/4)}(\mu)$ such that $\mu_0 = \frac{1}{N}\sum_{i = 1}^N \mu_{x_i}$, then there is $L \in \mathbb{N}$, that depends on $E$, such that for all $\mu \in E$, $N \leq L$. Also, observe that if $\lambda \in E$ is different than $\mu$, then $\rho_n(\mu_0, \lambda_0) \geq \varepsilon/2$, where $\rho_n$ is the dynamical Prohorov distance. In particular, for sufficiently larger $M \in \mathbb{N}$, $\rho_n(\mu_0, \lambda_0) \geq \varepsilon/M$. This implies that $\text{Sep}(T_*|_{\mathcal{G}_L}, n, \varepsilon/M) \geq \text{Sep}(T_*, n, \varepsilon)$, because $\mathcal{G}_L$ is closed and invariant for $T_*$. Moreover, Lemma \ref{lemma:distance-image-measure-to-hyperspace} and inequality (\ref{eq:hausdorff-distance-dynamics-times-identity}) implies that
    \begin{equation*}
        \text{Sep}(T_*, n, \varepsilon) \leq \text{Sep}(T_*|_{\mathcal{G}_L}, n, \varepsilon/M) \leq \text{Sep}((T\times \text{Id})_\hyp{K}|_{\Psi_L(\mathcal{G}_L)}, n, \varepsilon/ML),
    \end{equation*}
    where $\varepsilon/M < 1/L^2$. Note that, for each $L \in \mathbb{N}$, 
    \begin{equation*}
        \text{Sep}((T\times \text{Id})_\hyp{K}|_{\Psi_L(\mathcal{G}_L)}, n, \varepsilon/ML) \leq \text{Sep}((T\times \text{Id})_\hyp{K}, n, \varepsilon/ML).
    \end{equation*}
    Therefore,
    \begin{equation*}
        o(T_*) = \sup\limits_{\varepsilon > 0} [\text{Sep}(T_*, n, \varepsilon)] \leq \sup\limits_{\varepsilon > 0} [\text{Sep}((T\times \text{Id})_\hyp{K}, n, \varepsilon/ML)] = o((T\times \text{Id})_\hyp{K}).
    \end{equation*}
    Finally, by Lemma \ref{lemma:order-hyperspace-product-identity}, we have that $o(T_*) \leq o(T_\hyp{K})$.
\end{proof}

%%%%%%%%%%%%%%%%%%%%%%%%%%%%%%%%%%%%%%%%%%%%%%%%%%%%%%%%%%%%%%%%%%%%%%%%%%%%%%%%%%%%

\section{Final Remarks}

This work highlights the lack of an adequate tool for studying the complexity of the hyperspace-induced map when the base system $T$ has generalized entropy greater than linear growth. Moreover, it is hoped that one day we will achieve a complete understanding of the behavior of $\induce{T}$ when its base system has exactly linear order of growth. Furthermore, the lower bounds on the generalized entropy of $T_*$ are crucial for explaining how complexity increases when the base map is lifted to its measure-induced system. The author hopes that future research will further explore this subject, leading to a comprehensive understanding of how the generalized entropy of the base system $T$ relates to that of the measure-induced map.

%%===========================================================================================%%
%% If you are submitting to one of the Nature Portfolio journals, using the eJP submission   %%
%% system, please include the references within the manuscript file itself. You may do this  %%
%% by copying the reference list from your .bbl file, paste it into the main manuscript .tex %%
%% file, and delete the associated \verb+\bibliography+ commands.                            %%
%%===========================================================================================%%

\bibliography{sn-bibliography}% common bib file
%% if required, the content of .bbl file can be included here once bbl is generated
%%\input sn-article.bbl

\end{document}